\newtheorem{thm}{Theorem}
\newtheorem{exa}[thm]{Example}
\newtheorem{lem}[thm]{Lemma}
\newtheorem{cor}[thm]{Corollary}
\newtheorem{prop}[thm]{Proposition}
\newtheorem{conj}[thm]{Conjecture}
\def \se {\mathrm{se}} 
\def \ne {\mathrm{ne}}  
\begin{document}
\title{Hecke insertion and maximal increasing and decreasing sequences in fillings of stack polyominoes}
\author[$\ast$,$\dagger$]{Ting Guo}
\author[$\ddag$]{Svetlana Poznanovi\'c}
\affil[$\ast$]{MOE-LCSM, School of Mathematics and Statistics, Hunan Normal University, China.}
\affil[$\dagger$]{Faculty of Mathematics, University of Vienna, Austria.}
\affil[$\ddag$]{School of Mathematical and Statistical Sciences, Clemson University, USA.}

\date{} 
\maketitle
\begin{abstract}
We prove that the number of $01$-fillings of a given stack polyomino (a polyomino with justified rows whose lengths form a unimodal sequence) with at most one 1 per column which do not contain a fixed-size northeast chain and a fixed-size southeast chain, depends only on the set of row lengths of the polyomino. The proof is via a bijection between fillings of stack polyominoes which differ only in the position of one row and uses the Hecke insertion algorithm by Buch, Kresch, Shimozono, Tamvakis, and Yong and the jeu de taquin for  increasing tableaux of Thomas and Yong. Moreover, our bijection gives another proof of the result by Chen, Guo, and Pang that the crossing number and the nesting number have a symmetric joint distribution over linked partitions.

\end{abstract}

\noindent{\bf Keywords:} maximal chains,  Hecke insertion, $K$-theoretic jeu de taquin, stack polyominoes.

\noindent {\bf MSC Classification:} 05A19 , 05A05 

{\renewcommand{\thefootnote}{} \footnote{\emph{E-mail addresses}:
ting.guo@univie.ac.at (T.~Guo), spoznan@clemson.edu (S.~Poznanovi\'c)}

\footnotetext[1]{T.~Guo was supported by the Austrian Science Fund FWF grant SFB F50, the China Scholarship Council, and the Construct Program of the Key Discipline in Hunan Province. S.~Poznanovi\'c was supported by {NSF-DMS} 1815832.} 


\section{Introduction} \label{S:introduction}


A \emph{polyomino } is a finite subset of $\mathbb{Z}^2$ 
where every element of $\mathbb{Z}^2$ is represented by a square box. 
The polyomino is \emph{row-convex} (resp. \emph{column-convex}) if its every row (resp. column) is connected.
If the polyomino is both row- and column-convex, we say that it is {\it convex}.
It is \emph{intersection-free\/} if every two columns are \emph{comparable\/}, 
i.e., the row-coordinates of one column form a subset of those of the other column. 
Equivalently, it is intersection-free if every two rows are comparable.
A \emph{moon polyomino\/} is a convex and intersection-free polyomino (e.g. Figure~\ref{fig:1}a). 
By \emph{stack polyomino\/} in this paper, we mean a moon polyomino in which 
the columns are arranged by length in descending order from left to right, so that the rows are left justified (e.g. Figure~\ref{fig:1}b). 
We note that the term 'stack polyomino' has been used in other papers to denote polyominoes with justified columns rather than rows.

\begin{figure}[h]
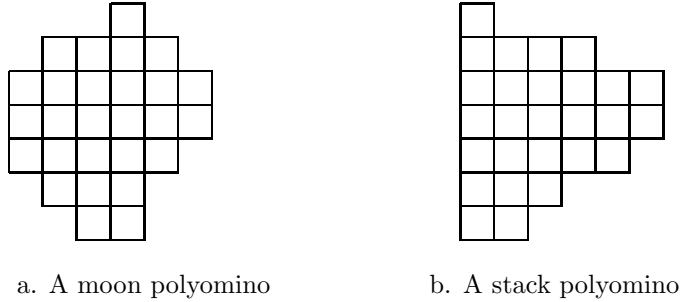

$$
\Einheit.15cm
\PfadDicke{.5pt}
\Pfad(0,6),222222222\endPfad
\Pfad(3,3),222222222222222\endPfad
\Pfad(6,0),222222222222222222\endPfad
\Pfad(9,0),222222222222222222222\endPfad
\Pfad(12,0),222222222222222222222\endPfad
\Pfad(15,6),222222222222\endPfad
\Pfad(18,9),222222\endPfad
\Pfad(9,21),111\endPfad
\Pfad(3,18),111111111111\endPfad
\Pfad(0,15),111111111111111111\endPfad
\Pfad(0,12),111111111111111111\endPfad
\Pfad(0,9),111111111111111111\endPfad
\Pfad(0,6),111111111111111\endPfad
\Pfad(3,3),111111111\endPfad
\Pfad(6,0),111111\endPfad
\hbox{\hskip6cm}
\PfadDicke{.5pt}
\Pfad(0,0),111111\endPfad
\Pfad(0,3),111111111\endPfad
\Pfad(0,6),111111111111111\endPfad
\Pfad(0,9),111111111111111111\endPfad
\Pfad(0,12),111111111111111111\endPfad
\Pfad(0,15),111111111111111111\endPfad
\Pfad(0,18),111111111111\endPfad
\Pfad(0,21),111\endPfad
\Pfad(0,0),222222222222222222222\endPfad
\Pfad(3,0),222222222222222222222\endPfad
\Pfad(6,0),222222222222222222\endPfad
\Pfad(9,3),222222222222222\endPfad
\Pfad(12,6),222222222222\endPfad
\Pfad(15,6),222222222\endPfad
\Pfad(18,9),222222\endPfad
\hskip3cm
$$
\centerline{\small a. A moon polyomino
\hskip2cm
b. A stack polyomino}
\caption{A moon polyomino and a stack polyomino with same row lengths.}
\label{fig:1}
\end{figure}

The main result in this paper is about certain fillings of stack polyominoes. An \emph{arbitrary filling} of a polyomino is an assignment of natural numbers to the boxes of the polyomino.  Here we are concerned with $01$-fillings of moon polyomino with restricted column sums. 
That is, given a moon polyomino $\mathcal{M}$,  a $0$ or a $1$ is assigned to each box of $\mathcal{M}$ so that 
there is at most one $1$ in each column.  We simply use the term \emph{filling} to denote such a $01$-filling. 
A box is empty if it is assigned a $0$ and it is a $1$-box otherwise.  A \emph{chain} is a sequence of non-zero entries in a filling such that the smallest rectangle containing all the elements of the sequence is completely contained in the moon polyomino. 
The \emph{length} of a chain is the number of entries in the chain. A \emph{ne-chain} of a filling is a sequence of non-zero entries in the fillings, 
such that each entry is strictly to the right and strictly above the preceding entry of the sequence. 
Similarly,  an \emph{se-chain} of a filling is a sequence of non-zero entries in the filling such that each entry is strictly to the right and strictly below the preceding entry of the sequence.   Let $N(\mathcal{M};n;ne=u,se=v)$ be the number of $01$-fillings of the moon polyomino $\mathcal{M}$ with at most one $1$ in each column such that: 
the sum of entries equal to $n$, the longest $ne$-chain has length $u$, and the longest $se$-chain has length $v$. Our main result is the following.

\begin{thm} \label{Thm1} 
Let $\mathcal{M}$ be a stack polyomino and $\sigma$ be a permutation of the rows of $\mathcal{M}$, 
such that $\sigma \mathcal{M}$ is a stack polyomino. Then
\[N(\mathcal{M};n;ne=u,se=v)=N(\sigma\mathcal{M};n;ne=u,se=v).\]
\end{thm}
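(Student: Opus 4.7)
My plan is to establish the theorem for a single elementary row move and then compose. Any permutation $\sigma$ taking a stack polyomino $\mathcal{M}$ to another stack polyomino $\sigma\mathcal{M}$ can be realized as a sequence of moves, each relocating one row within the unimodal profile while keeping it unimodal; so it suffices to construct, when $\sigma\mathcal{M}$ differs from $\mathcal{M}$ in the position of a single row $R$, a bijection between fillings that preserves the triple $(n,u,v)$.

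Fix such a row $R$ and a $01$-filling $f$ of $\mathcal{M}$. Since $\mathcal{M}$ is convex and intersection-free, the rows at least as long as $R$ share the same column set on both sides of $R$; their union forms a ``slab'' whose column structure is unaffected by moving $R$, while the $1$-entries lying in strictly shorter rows are left alone by the move. I would encode the $1$-entries in the slab as a word $w(f)$ by reading columns from left to right and recording the row index of the $1$ in each non-empty column, and then apply the Hecke insertion of Buch--Kresch--Shimozono--Tamvakis--Yong to $w(f)$. This produces an increasing tableau $P(f)$ together with a set-valued recording tableau $Q(f)$ of the same outer shape. By the $K$-theoretic analogue of Greene's theorem, the first-row length of $P(f)$ equals the longest strictly increasing subsequence of $w(f)$ and the first-column length equals the longest strictly decreasing one; under the reading convention these match the lengths of the longest $\ne$- and $\se$-chains of $f$ inside the slab.

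To move to $\sigma\mathcal{M}$, I would apply the $K$-theoretic jeu de taquin of Thomas and Yong to slide $Q(f)$ from the outer shape dictated by $R$'s old position to the one dictated by its new position, keeping $P(f)$ fixed. Because $K$-jeu de taquin is confluent and preserves rectification shapes, the output is a valid pair whose inverse Hecke insertion yields a filling $f'$ of $\sigma\mathcal{M}$. Preservation of $n$ follows because Hecke insertion preserves the total cardinality of the set-valued recording tableau, and preservation of $u$ and $v$ follows from the invariance of the shape of $P(f)$ under the slide.

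The main obstacle I foresee is verifying that the local $K$-Greene invariants extracted from the slab actually equal the global longest $\ne$- and $\se$-chain lengths of $f$, not merely those confined to the slab. This calls for a splicing argument exploiting convexity: any chain of $f$ decomposes canonically into a part inside the slab and a part among the shorter rows, and this decomposition must respect concatenation in a way preserved by the slide. A secondary obstacle is confirming that inverse Hecke insertion returns a genuine $01$-filling of $\sigma\mathcal{M}$ --- in particular that each column of $\sigma\mathcal{M}$ receives at most one $1$ --- which reduces to tracking the column structure of $Q$ through each jeu de taquin slide, a property I expect to extract from the Thomas--Yong machinery. Once these two technical points are handled, the theorem follows from iterating the single-row bijection along any admissible path of elementary moves.
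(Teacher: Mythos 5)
Your toolbox (Hecke insertion, the Thomas--Yong Greene-type theorem, $K$-jeu de taquin) is the right one, but the proposal has genuine gaps at exactly the points where the real work lies. First, the ``slab'' of all rows at least as long as $R$ is not a rectangle, and in this paper a chain must have its bounding rectangle inside the polyomino; consequently increasing/decreasing subsequences of a word read off a non-rectangular region do \emph{not} correspond to $\ne$/$\se$-chains, and the shape of the Hecke insertion tableau of your slab word controls nothing geometric. The correct local unit is a maximal rectangle: $\ne$ and $\se$ of a filling are the maxima, over all maximal rectangles, of the corresponding word statistics. So even after you preserve the statistics inside the rectangle containing the moved row, you must separately prove preservation in every other maximal rectangle that partially overlaps it. This is precisely the bulk of the paper's argument (locality of the Hecke growth diagram rules handles the narrower rectangles via equality of recording tableaux; the wider rectangles require the $K$-Knuth machinery: Lemma~\ref{Pro1}, Theorem~\ref{Thm3}, Propositions~\ref{Pro2} and~\ref{Pro4}), and in your write-up it is only flagged as an ``obstacle'' with a hoped-for splicing argument, not resolved. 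Second, the bijection itself is misdescribed: $Q$ is a set-valued tableau of the same straight shape as $P$, so there is no skew shape of $Q$ to slide while ``keeping $P$ fixed.'' Moving the row changes the \emph{values} (row indices) in the word, not the positions, so the recording tableau should be kept and the insertion tableau transformed; the paper does this by deleting the $1$'s from $P$, performing one $K$-jdt slide, shifting entries down by one and refilling the vacated corner with the maximal value, then reverse Hecke inserting with the old $Q$.

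Third, your reduction to arbitrary single-row moves is hazardous. The paper reduces instead to repeated ``move the bottom row up as much as possible'' moves toward a canonical Ferrers shape, and Section~\ref{S:discussion} (Figures~\ref{fig:19}--\ref{fig:20}) gives an explicit counterexample showing that the rectangle-local map of this type, applied to a non-bottom row while fixing everything outside, can change $\ne$ from $5$ to $4$. Since your construction is of exactly this local type (re-encode the region around $R$, fix the shorter rows), there is no reason to expect it to work for a general elementary row move; you would need to restrict to bottom-row moves (and their inverses) for the composition argument to go through. Finally, note a small but real subtlety you glossed over: the map must also handle empty rows and columns of the rectangle explicitly (the paper shifts the empty-row indices by one) so that first/second components, i.e.\ nonempty rows and columns, behave correctly; this matters for recovering the linked-partition application.
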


In other words, the number $N(\mathcal{M};n;ne=u,se=v)$ only depends on the row lengths of the stack polyomino $\mathcal{M}$. Since the reflection through the horizontal axis swaps \emph{se}- and \emph{ne}-chains, this implies that for each stack polyomino $\mathcal{M}$ the joint distribution of $\mathrm{ne}$ and $\mathrm{se}$ is symmetric. Precisely, we have the following.

\begin{cor} \label{Cor1} 
Let $\mathcal{M}$ be a stack polyomino . Then
\[N(\mathcal{M};n;ne=u,se=v)=N(\mathcal{M};n;ne=v,se=u).\]
\end{cor}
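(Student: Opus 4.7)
The plan is to combine Theorem~1 with a simple geometric symmetry, making precise the remark preceding the corollary. Let $\mathcal{M}^{\ast}$ denote the polyomino obtained by reflecting $\mathcal{M}$ through a horizontal axis. I would first verify that $\mathcal{M}^{\ast}$ is again a stack polyomino with the same multiset of row lengths as $\mathcal{M}$: horizontal reflection leaves all column lengths unchanged and the rows remain left-justified, so the convexity and intersection-free properties persist; the row-length sequence is merely reversed, and reversing a unimodal sequence produces a unimodal sequence. Hence $\mathcal{M}^{\ast}$ is a stack polyomino with the same row-length multiset as $\mathcal{M}$.

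Next, the reflection induces a bijection between fillings of $\mathcal{M}$ and fillings of $\mathcal{M}^{\ast}$ obtained by sending each $1$-box to its mirror image. Since the transformation acts trivially on column coordinates, the at-most-one-$1$-per-column condition and the total number of $1$s are preserved. The key point is that reflection through a horizontal line swaps ``strictly above'' with ``strictly below'' while keeping ``strictly to the right'' unchanged, so a ne-chain in a filling of $\mathcal{M}$ corresponds bijectively to an se-chain in the reflected filling of $\mathcal{M}^{\ast}$, and vice versa. The rectangle-containment requirement in the definition of a chain is preserved because the reflection maps $\mathcal{M}$ isometrically onto $\mathcal{M}^{\ast}$. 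Consequently,
\[
N(\mathcal{M};n;ne=u,se=v) \;=\; N(\mathcal{M}^{\ast};n;ne=v,se=u).
\]

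To finish, I would invoke Theorem~1: since $\mathcal{M}^{\ast}$ is a stack polyomino obtained from $\mathcal{M}$ by a permutation of the rows, there exists $\sigma$ with $\sigma\mathcal{M}=\mathcal{M}^{\ast}$, and Theorem~1 yields
\[
N(\mathcal{M}^{\ast};n;ne=v,se=u) \;=\; N(\mathcal{M};n;ne=v,se=u).
\]
Chaining the two equalities gives the corollary. There is no real obstacle in this argument, since all of the nontrivial content lies in Theorem~1; the only items that require any checking are the two easy observations that horizontal reflection sends stack polyominoes to stack polyominoes and that it interchanges ne- with se-chains.
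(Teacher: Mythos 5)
Your argument is correct and is exactly the paper's intended derivation: the paper deduces Corollary~\ref{Cor1} from Theorem~\ref{Thm1} by the same observation that reflection through a horizontal axis sends a stack polyomino to a stack polyomino with permuted rows while exchanging \emph{ne}- and \emph{se}-chains. Your write-up simply spells out the easy verifications (unimodality of the reversed row-length sequence, preservation of the column condition and of rectangle containment) that the paper leaves implicit.
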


Chains in fillings of polyominoes have been studied by many people before. Using the Fomin's growth diagrams, Krattenthaler~\cite{Krattenthaler}  proved the analogue of our result  for 01-fillings of Ferrers diagrams with at most one 1 in each row \emph{and} column as well as variants for $\mathbb{N}$-fillings. In fact, restricting the number of 1's in the rows in addition to the columns guarantees that the result holds even if we preserve the row sums, which is not the case in the present paper (see Section~\ref{S:discussion} for details). If one considers 01-fillings with arbitrary row and column sums, then it is known that the number of fillings in which $ne=u$ depends only on the set of columns of the polyomino, but not the shape of the polyomino.  Algebraic proofs of this fact  for stack polyominoes are due to Jonsson~\cite{Jon05} and Jonsson and Welker~\cite{JW07}. Rubey~\cite{Rubey} extended this result to moon polyominoes using a bijection based on the Robinson-Schenstead-Knuth (RSK) insertion and jeu de taquin as well as the inclusion-exclusion principle. A completely  bijective proof and an extension to almost-moon polyominoes (in which the convexity  property is broken by only one row) was given by Poznanovi\'c and Yan~\cite{PY}. 

In the context of fillings of Ferrers shapes, the results mentioned above can be viewed as extensions of results about enumeration of:  permutations and involutions with restricted patterns of Backelin, West, and Xin~\cite{BWX} and of Bousquet-M\'elou and Steingr{\'i}msson~\cite{BMS}, crossings and nestings in matchings and set partitions due to Chen, Deng, Du, Stanley, and Yan~\cite{CDDSY} or graphs with restricted right and left degree sequences as shown by de Mier~\cite{deMier}.

As we discuss in Section~\ref{S:FS}, our result is an extension of the result of Chen, Guo, and Pang~\cite{CGP} about the symmetry of the crossing and the nesting numbers over linked partitions. To prove their result, Chen, Guo, and Pang introduced vacillating Hecke tableaux and found a bijection between vacillating Hecke tableaux and linked partitions using the Hecke insertion of Buch, Kresch, Shimozono, Tamvakis, and Yong~\cite{BKSTY}. While our proof is based on a bijection using the Hecke insertion as well, our bijection is different from the one in ~\cite{CGP} even for the case of linked partitions.

This paper is organized as follows. To prove Theorem~\ref{Thm1}, we construct a bijection between fillings of $\mathcal{M}$ and fillings of $\mathcal{\sigma M}$ where $\sigma$ only moves the bottom row of $\mathcal{M}$. The bijection, described in Section~\ref{S:proof} uses the Hecke insertion of Buch, Kresch, Shimozono, Tamvakis and Yong~ \cite{BKSTY} and the jeu de taquin for increasing tableaux of Thomas, and Yong~ \cite{TY09}. In addition, the proof uses $K$-Knuth equivalence on words and increasing tableaux~\cite{BuchSamuel, TY11}. The background that we need is presented in Section~\ref{S:tools}. In Section~\ref{S:FS}, we show that the main result in~\cite{CGP} is a special case of Theorem~\ref{Thm1} and give an example to show that our bijection for the special case is different from the one in~\cite{CGP}. Finally, in Section~\ref{S:discussion} we discuss possible and impossible extensions of Theorem~\ref{Thm1}.

\section{Main Tools}\label{S:tools}


The Hecke insertion~\cite{BKSTY}, the Hecke growth diagrams~\cite{PP}, the jeu de taquin for increasing tableaux~\cite{TY09}, 
and the notion of $K$-Knuth equivalence on words and increasing tableaux \cite{BuchSamuel, TY11} are the main tools in Section~\ref{S:proof}. For the convenience of the reader, in this section we give all the necessary definitions, following the original description and notation as much as possible. The description of all of these concepts except the Hecke growth diagrams can also be found in~\cite{GMPPRST}.

\subsection{Hecke Insertion}

The Hecke insertion was developed in~\cite{BKSTY} for the study of the stable Grothendieck polynomials.

An {\it increasing tableau} is a filling of a Ferrers shape assigning to each box a positive integer
such that the entries strictly increase along each row and down each column.
The Hecke insertion is a procedure to insert a positive integer $x$ into an increasing tableau $Y$,
resulting in another increasing  tableau $Z$, so that either $Z$ has the same shape as $Y$ 
or it has one extra box $c$.
In the case when $Z$ has the same shape as $Y$, it also contains a special corner $c$ where the insertion algorithm terminated. A parameter $\alpha\in \lbrace 0, 1 \rbrace$ is used to distinguish these two cases: $\alpha$ is set to $1$ if and only if the corner $c$ is outside the shape of $Y$. 
Thus the complete output of the insertion algorithm is the triple $(Z, c, \alpha)$.
We write $(Z, c, \alpha)= (Y\xleftarrow{\,\mathrm{H}}x)$.

The Hecke insertion algorithm proceeds by inserting the integer $x$ into the first row of $Y$. 
This may modify this row, and possibly produce an {\it output integer}, 
which is then inserted into the second row of $Y$, etc. 
This process is repeated until an insertion does not produce an output integer. 
The rules for inserting an integer $x$ into a row $R$ are as follows. 

If $x$ is larger than or equal to all entries in $R$, then no output integer is produced and the algorithm terminates. If adding $x$ as a new box to the end of $R$ results in an increasing tableau, 
then $Z$ is the resulting tableau, $\alpha=1$, and $c$ is the corner where $x$ was added.
If adding $x$ as a new box to the end of $R$ does not result in an increasing tableau, 
then let $Z=Y$, $\alpha=0$, and $c$ is the corner at the bottom of the column of $Z$ 
containing the rightmost box of $R$.

Otherwise the integer $x$ is strictly smaller than some entry in $R$, 
and we let $y$ be the smallest integer in $R$ that is strictly larger than $x$.
If replacing $y$ with $x$ results in an increasing tableau, then replace $y$ with $x$ and insert $y$ into the next row.
If replacing $y$ with $x$ does not result in an increasing tableau, 
then insert $y$ into the next row and do not change $R$.

\begin{exa} \label{Ex1} 
Let $Y$ be an increasing Young tableau given in Figure~\ref{fig:2}a. Suppose we wish to compute $(Y\xleftarrow{\,\mathrm{H}}2)$. Since the first row of $Y$ contains $2$, $4$ is inserted into the second row, whose largest value is 4. So, the algorithm terminates with $\alpha=0$, and $c=(3,2)$ is the corner in the third row and second column. The resulting tableau $Z$ is given in Figure~\ref{fig:2}b. On the other hand, suppose we wish to compute $(Y\xleftarrow{\,\mathrm{H}}5)$. In the first step, $5$ is inserted into the first row of $Y$, replacing $6$. The integer $6$ is then inserted into the second row, adding $6$ to the end of this row. The algorithm terminates with the tableau $Z$ in Figure~\ref{fig:2}c, $\alpha=1$, and $c=(2,3)$.
\end{exa}

\begin{figure}[h]
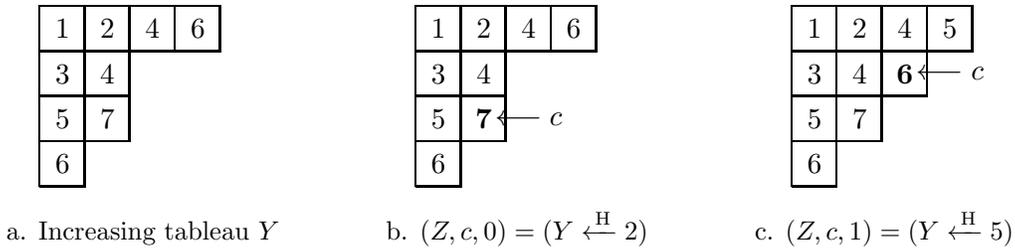

$$
\Einheit.2cm
\PfadDicke{.5pt}
\Pfad(0,0),222222222222\endPfad
\Pfad(3,0),222222222222\endPfad
\Pfad(6,3),222222222\endPfad
\Pfad(9,9),222\endPfad
\Pfad(12,9),222\endPfad
\Pfad(0,0),111\endPfad
\Pfad(0,3),111111\endPfad
\Pfad(0,6),111111\endPfad
\Pfad(0,9),111111111111\endPfad
\Pfad(0,12),111111111111\endPfad
\Label\ro{1}(1,10)
\Label\ro{3}(1,7)
\Label\ro{5}(1,4)
\Label\ro{6}(1,1)
\Label\ro{2}(4,10)
\Label\ro{4}(4,7)
\Label\ro{7}(4,4)
\Label\ro{4}(7,10)
\Label\ro{6}(10,10)
\hbox{\hskip5cm}
\Einheit.2cm
\PfadDicke{.5pt}
\Pfad(0,0),222222222222\endPfad
\Pfad(3,0),222222222222\endPfad
\Pfad(6,3),222222222\endPfad
\Pfad(9,9),222\endPfad
\Pfad(12,9),222\endPfad
\Pfad(0,0),111\endPfad
\Pfad(0,3),111111\endPfad
\Pfad(0,6),111111\endPfad
\Pfad(0,9),111111111111\endPfad
\Pfad(0,12),111111111111\endPfad
\Label\ro{1}(1,10)
\Label\ro{3}(1,7)
\Label\ro{5}(1,4)
\Label\ro{6}(1,1)
\Label\ro{2}(4,10)
\Label\ro{4}(4,7)
\Label\ro{\bf 7}(4,4)
\Label\ro{\longleftarrow c}(7,4)
\Label\ro{4}(7,10)
\Label\ro{6}(10,10)
\hbox{\hskip5cm}
\Einheit.2cm
\PfadDicke{.5pt}
\Pfad(0,0),222222222222\endPfad
\Pfad(3,0),222222222222\endPfad
\Pfad(6,3),222222222\endPfad
\Pfad(9,6),222222\endPfad
\Pfad(12,9),222\endPfad
\Pfad(0,0),111\endPfad
\Pfad(0,3),111111\endPfad
\Pfad(0,6),111111111\endPfad
\Pfad(0,9),111111111111\endPfad
\Pfad(0,12),111111111111\endPfad
\Label\ro{1}(1,10)
\Label\ro{3}(1,7)
\Label\ro{5}(1,4)
\Label\ro{6}(1,1)
\Label\ro{2}(4,10)
\Label\ro{4}(4,7)
\Label\ro{7}(4,4)
\Label\ro{4}(7,10)
\Label\ro{\bf 6}(7,7)
\Label\ro{\longleftarrow c}(10,7)
\Label\ro{5}(10,10)
\hbox{\hskip2.5cm}
$$
\centerline{\small a. Increasing  tableau $Y$
\hskip1.3cm
b. $(Z,c,0)= (Y\xleftarrow{\,\mathrm{H}}2)$
\hskip1.3cm
c. $(Z,c,1)= (Y\xleftarrow{\,\mathrm{H}}5)$}

\caption{Two examples of Hecke insertion.}
\label{fig:2}
\end{figure}

The Hecke insertion is reversible. Let $Z$ be an increasing Young tableau, $c$ be a corner of $Z$, 
and $\alpha\in\lbrace 0, 1\rbrace$. Reverse Hecke insertion applied to the triple $(Z, c, \alpha)$
produces a pair $(Y, x)$ of an increasing Young tableau $Y$ and a positive integer $x$ as follows. 
Let $y$ be the integer in the box $c$ of $Z$. If $\alpha=1$ then remove $y$. 
In any case, reverse insert $y$ into the row above the corner $c$.

Whenever $y$ is reverse inserted into a row $R$, let $x$ be the largest entry of $R$ such that $x<y$.
If replacing $x$ with $y$ results in an increasing tableau, then replace $x$ with $y$ and
$x$ is reverse inserted into the row above of $R$.
If replacing $x$ with $y$ does not result in an increasing tableau, 
then $x$ is reverse inserted into the row above of $R$ and $R$ remains the same.
This process is repeated until $R$ is the top row, then $x$ becomes the final output value, 
along with the modified tableau.

Hecke insertion is a generalization of the standard RSK algorithm. While the RSK algorithm associates a permutation to a pair of standard Young tableaux of the same shape, the Hecke insertion gives a correspondence between words and a pair $(P,\,Q)$, where $P$ is an increasing tableau and $Q$ is a set-valued tableau of the same shape. $P$ is the \emph{Hecke insertion tableau} while $Q$ is  the \emph{Hecke recording tableau}. {\it Set-valued tableaux}, introduced by Buch~\cite{Buch} in the study of the $K$-theory of Gra{\ss}mannians, are $\mathbb{N}$-fillings of a Young diagram assigning to each box a nonempty set of positive integers such that
the largest entry of a box is smaller than the smallest entry in the boxes directly to the right of it and directly below it. For a word of nonnegative integers $w$, the corresponding pair $(P,\,Q)$ is constructed as follows.

Let $w=w_1\cdots w_n$ be a word. The insertion tableau of $w$ is formed by 
recursively Hecke inserting the letters of $w$ from left to right:
$$P(w)=(\cdots((\emptyset\xleftarrow{\,\mathrm{H}} w_1)\xleftarrow{\,\mathrm{H}} w_2)\cdots \xleftarrow{\,\mathrm{H}} w_n).$$
The set-valued tableau of $w$ is obtained recursively as follows. 
Set $Q(\emptyset)=\emptyset$. At each step of the insertion of $w$,
let $Q(w_1 \cdots w_k)$ be obtained from $Q(w_1 \cdots w_{k-1})$ by labeling the special corner $c$,
in the insertion of $w_k$ into $P(w_1 \cdots w_{k-1})$ with the positive integer $k$.
Then $Q(w)=Q(w_1\cdots w_n)$ is the resulting set-valued tableau. Similarly as in the case of the RSK correspondence, the tableaux $Q$ determines the order in which numbers from $P$ need to be reverse inserted, so that the corresponding word can be recovered.

\begin{exa} \label{Ex2} 
Let $w=32412143$, $P(w)$ and $Q(w)$ are given in Figure~\ref{fig:3}.
\end{exa}

\begin{figure}[h]
$$
\Einheit.2cm
\PfadDicke{.5pt}
\Pfad(0,0),222222222\endPfad
\Pfad(3,0),222222222\endPfad
\Pfad(6,3),222222\endPfad
\Pfad(9,6),222\endPfad
\Pfad(0,0),111\endPfad
\Pfad(0,3),111111\endPfad
\Pfad(0,6),111111111\endPfad
\Pfad(0,9),111111111\endPfad
\Label\ro{1}(1,7)
\Label\ro{2}(1,4)
\Label\ro{3}(1,1)
\Label\ro{2}(4,7)
\Label\ro{4}(4,4)
\Label\ro{3}(7,7)
\Label\ro{P(w)=}(-4,4)
\hbox{\hskip5cm}
\Einheit.2cm
\PfadDicke{.5pt}
\Pfad(0,0),222222222\endPfad
\Pfad(3,0),222222222\endPfad
\Pfad(6,3),222222\endPfad
\Pfad(9,6),222\endPfad
\Pfad(0,0),111\endPfad
\Pfad(0,3),111111\endPfad
\Pfad(0,6),111111111\endPfad
\Pfad(0,9),111111111\endPfad
\Label\ro{1}(1,7)
\Label\ro{2}(1,4)
\Label\ro{4,6}(1,1)
\Label\ro{3}(4,7)
\Label\ro{5,8}(4,4)
\Label\ro{7}(7,7)
\Label\ro{Q(w)=}(-4,4)
\hskip1.8cm
$$
\caption{The pair $(P(w),Q(w))$ of an increasing and set-valued tableaux for $w=32412143$.}
\label{fig:3}
\end{figure}

In \cite{TY11}, Thomas and Yong prove that the insertion tableau of a word generated by the Hecke insertion
determines the lengths of the longest strictly increasing and strictly decreasing subsequences in the word.
That is to say, for a word $w$, let $\mathrm{lis}(w)$ (resp. $\mathrm{lds}(w)$) denote the length of the longest strictly increasing (resp. decreasing) subsequence of word $w$, and for an increasing tableau $P$, 
let $\mathrm{c}(P)$ (resp. $\mathrm{r}(P)$) denote the number of columns (resp. rows) of $P$, 
Thomas and Yong \cite{TY11} established the following theorem. 

\begin{thm} \label{Thm2} \cite{TY11}
Let $w$ be a word and let $P$ be the Hecke insertion tableau of $w$.
Then $\mathrm{lis}(w)=\mathrm{c}(P)$ and $\mathrm{lds}(w)=\mathrm{r}(P)$.
\end{thm}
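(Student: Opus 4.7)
The plan is to reduce the two equalities to invariance properties of $K$-Knuth equivalence. By the correspondence of Buch--Samuel cited above, two words have the same Hecke insertion tableau if and only if they are $K$-Knuth equivalent; in particular every word $w$ is $K$-Knuth equivalent to the column reading word $\mathrm{col}(P(w))$ of its Hecke insertion tableau, where $\mathrm{col}$ reads each column from bottom to top and concatenates columns from left to right. It therefore suffices to prove (i) that $\mathrm{lis}$ and $\mathrm{lds}$ are invariants of $K$-Knuth equivalence, and (ii) that the column reading word of any increasing tableau $P$ satisfies $\mathrm{lis}(\mathrm{col}(P))=\mathrm{c}(P)$ and $\mathrm{lds}(\mathrm{col}(P))=\mathrm{r}(P)$.

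For (ii), I would argue directly from the definition. Within a single column of $\mathrm{col}(P)$ the entries appear in strictly decreasing order, so any strictly increasing subsequence contains at most one entry per column, giving $\mathrm{lis}(\mathrm{col}(P))\le \mathrm{c}(P)$; equality is witnessed by the top row of $P$, which is itself strictly increasing. Dually, whenever $\mathrm{col}(P)$ passes from column $j$ to column $j+1$ the value strictly increases, because the final letter read from column $j$ is $P[1,j]$ while the first letter read from column $j+1$ is $P[h_{j+1},j+1]\ge P[1,j+1]>P[1,j]$ by the row- and column-strictness of $P$. Hence any strictly decreasing subsequence is confined to a single column, giving $\mathrm{lds}(\mathrm{col}(P))\le \mathrm{r}(P)$; equality is realized by the leftmost column.

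For (i), I would check invariance on each generating $K$-Knuth relation. For the ordinary Knuth relations $acb\equiv cab$ (with $a\le b<c$) and $bac\equiv bca$ (with $a<b\le c$), the argument is essentially the case analysis used in the proof of Schensted's theorem: any strictly monotone subsequence either avoids the three affected positions or can be rerouted through them with length preserved. The additional $K$-theoretic braid-type relation $pqp\equiv qpq$ requires a similar rerouting argument that must also cope with the outer letters being equal. The main obstacle is precisely this last verification, since a naive rerouting can fail when equal values collide in the middle of a strictly monotone chain, and one must exhibit a length-preserving alternative in every such configuration. Once each relation is handled, combining (i) with (ii) yields $\mathrm{lis}(w)=\mathrm{lis}(\mathrm{col}(P(w)))=\mathrm{c}(P(w))$ and $\mathrm{lds}(w)=\mathrm{lds}(\mathrm{col}(P(w)))=\mathrm{r}(P(w))$.
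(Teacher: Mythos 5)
The paper itself does not prove Theorem~\ref{Thm2}: it is imported from Thomas and Yong, whose proof goes through $K$-theoretic jeu de taquin, so there is no in-paper argument to match yours against. Your proposed route (replace $w$ by a reading word of $P(w)$ up to $K$-Knuth equivalence, show $\mathrm{lis}$ and $\mathrm{lds}$ are $K$-Knuth invariants, then compute them on the reading word) is a legitimate alternative, but as written it has three genuine gaps. First, the opening claim is false: $K$-Knuth equivalence does \emph{not} characterize having the same Hecke insertion tableau -- the paper states explicitly in Section~\ref{S:tools} that $K$-Knuth equivalent words may have different insertion tableaux. What you actually need is only the true direction, namely $w\equiv\mathrm{row}(P(w))$ (Lemma~\ref{Pro3}); and if you insist on the column word you must additionally justify $\mathrm{col}(P)\equiv\mathrm{row}(P)$, which you do not address.

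Second, in part (ii) the inequality $\mathrm{lds}(\mathrm{col}(P))\le \mathrm{r}(P)$ is argued incorrectly: from the fact that the word increases at each column boundary you conclude that a strictly decreasing subsequence is confined to a single column, which is a non sequitur since a subsequence need not use consecutive letters. For $P$ with first row $1\,2$ and second row $3$ one has $\mathrm{col}(P)=312$, and $3,2$ is a decreasing subsequence meeting two columns. The bound itself is true, but the correct argument is different: two entries of a strictly decreasing subsequence of $\mathrm{col}(P)$ cannot lie in the same row (the later-read one would sit in a strictly later column and hence be strictly larger), so the subsequence meets each row at most once. Alternatively, work with $\mathrm{row}(P)$, where both bounds are immediate: each row segment is increasing, so a decreasing subsequence meets each row at most once, and consecutive entries of an increasing subsequence must occupy strictly increasing columns. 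Third, part (i) is the real content of your reduction and is left unproved: you explicitly flag the case $xyx\equiv yxy$ as ``the main obstacle'' without resolving it, and you omit the relation $x\equiv xx$ from the list of moves entirely. This invariance is exactly Proposition~\ref{Pro2} (Proposition 37 of the cited $K$-Knuth paper), so you should either cite it or actually carry out the case analysis; until then the proposal is an outline rather than a proof.
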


\subsection{Hecke Growth Diagrams}

It is well-known that the pair of standard Young tableaux that correspond to a permutation $\pi$ via RSK can also be obtained using the \emph{growth diagrams} introduced by Fomin in \cite{Fomin86, Fomin94, Fomin95}. The growth diagrams for Hecke insertion was given by Patrias and Pylyavksyy in~\cite{PP}. In the following we follow \cite{PP} to give a description of the {\it Hecke growth diagram} of a word.

We represent a word $w=w_1w_2\cdots $ 
containing $a_1$ copies of $1$, $a_2$ copies of $2$, $\dots$,
$a_m$ copies of $m$ as a filling of 
an $m\times(a_1+a_2+\cdots+a_m)$ rectangle, 
by placing a $X$ into the $j$-th column and $w_j$-th row 
(we number  the rows  from bottom to top),
for $j=1,2,\ldots,a_1+a_2+\cdots+a_m$.
We say this is the {\it matrix representation} of $w$.
For example, the word $213312$ is represented by the filling of
Figure~\ref{fig:4}.

\begin{figure}[h]

$$
\Einheit.2cm
\PfadDicke{.5pt}
\Pfad(0,9),111111111111111111\endPfad
\Pfad(0,6),111111111111111111\endPfad
\Pfad(0,3),111111111111111111\endPfad
\Pfad(0,0),111111111111111111\endPfad
\Pfad(18,0),222222222\endPfad
\Pfad(15,0),222222222\endPfad
\Pfad(12,0),222222222\endPfad
\Pfad(9,0),222222222\endPfad
\Pfad(6,0),222222222\endPfad
\Pfad(3,0),222222222\endPfad
\Pfad(0,0),222222222\endPfad
\Label\ro{\text {\small$X$}}(1,4)
\Label\ro{\text {\small$X$}}(4,1)
\Label\ro{\text {\small$X$}}(7,7)
\Label\ro{\text {\small$X$}}(10,7)
\Label\ro{\text {\small$X$}}(13,1)
\Label\ro{\text {\small$X$}}(16,4)
\hskip4cm
$$
\caption{The matrix representation of $213312$.}
\label{fig:4}
\end{figure}

For a filling of a rectangle, we label each of the four corners of each square with a partition 
and possibly label the horizontal edge of the square with a positive integer $r$,
where $r$ records the row of the inner corner at which the Hecke insertion terminates.
We start by labeling all corners along the bottom row and left side of the diagram 
with the partition $\emptyset$ and then use the following {\it forward  local rules} 
to label the other corners of the squares and some of the horizontal edges of the squares.

Suppose we have the labels of the three corners of each square except the upper right one, for example,
given $\lambda$, $\mu$, $\nu$ in Figure~\ref{fig:7}, we can construct $\gamma$ and
possibly the positive integer for the horizontal edge of the square.

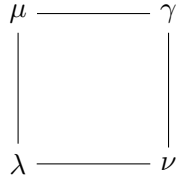
\begin{figure}[h]
$$
\begin{tikzpicture}[scale=1]
\node (A) at (-1,-1) {$\lambda$};
\node (B) at (1,-1) {$\nu$};
\node (C) at (-1,1) {$\mu$};
\node (D) at (1,1) {$\gamma$};
\draw (A) -- (B)
(C) -- (D)
(B) -- (D)
(A) -- (C);
\end{tikzpicture}
$$
\caption{A square of a growth diagram.}
\label{fig:7}
\end{figure}

Case 1. If the square contains an X:
\begin{itemize} 
\item[(F1)] If $\mu_1=\nu_1$, then $\gamma/\mu$ consists of one box in row 1.
\item[(F2)] If $\mu_1 \neq \nu_1$, then $\gamma=\mu$ and the edge between them is labeled by 
the row of the highest inner corner of $\mu$ (an inner corner is a maximally southeast box of $\mu$). 
\end{itemize}

Case 2. If the square does not contain an X and if either $\mu=\lambda$ or $\nu=\lambda$ with 
no label between $\lambda$ and $\nu$:
\begin{itemize}
\item[(F3)] If $\mu=\lambda$, then $\gamma = \nu$. If $\nu=\lambda$, then $\gamma = \mu$.  
\end{itemize}

Case 3. If the square does not contain an X, dose not satisfy the condition of Case 2, 
and if $\nu \nsubseteq \mu$:
\begin{itemize}
\item[(F4)] For Case 3, $\gamma = \nu \cup \mu$.  
\end{itemize}

Case 4. If the square does not contain an X, dose not satisfy the condition of Case 2, 
and if $\nu \subseteq \mu$:
\begin{itemize}
\item[(F5)] If $\nu/\lambda$ is one box in row $i$ and $\mu/\nu$ has no boxes in row $i+1$, 
then $\gamma/\mu$ is one box in row $i+1$.

\item[(F6)] If $\nu/\lambda$ is one box in row $i$ and $\mu/\nu$ has a box in row $i+1$, then $\gamma=\mu$ and the edge between them is labeled $i+1$.

\item[(F7)] If $\nu=\lambda$, the edge between them is labeled $i$, and there are no boxes of $\mu/\nu$
immediately to the right or immediately below the inner corner of $\nu$ in row $i$, then $\gamma=\mu$ with the edge between them labeled $i$.

\item[(F8)] If $\nu=\lambda$, the edge between them is labeled $i$, and there is a box of $\mu/\nu$ directly below
the inner corner of $\nu$ in row $i$, then $\gamma=\mu$ with the edge between them labeled $i+1$.

\item[(F9)] If $\nu=\lambda$, the edge between them is labeled $i$, and there is a box of $\mu/\nu$ immediately 
to the right of the inner corner of $\nu$ in row $i$ but no box of $\mu/\nu$ in row $i+1$, then $\gamma/\mu$ is one box in row $i+1$.

\item[(F10)] If $\nu=\lambda$, the edge between them is labeled $i$, and there is a box of $\mu/\nu$ immediately 
to the right of this inner corner of $\nu$ in row $i$ and a box of $\mu/\nu$ in row $i+1$, then $\gamma=\mu$ with the edge between them labeled $i+1$. 

\end{itemize}

We call the resulting diagram the {\it Hecke growth diagram}. 
For example, in Figure~\ref{fig:8} we have the Hecke growth diagram for the word $213312$.

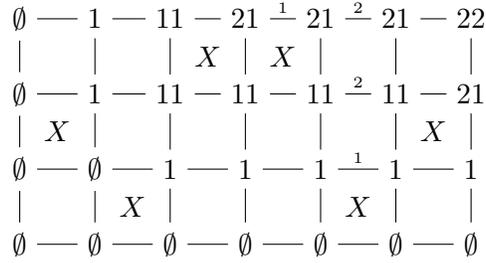
\begin{figure}
$$
\begin{tikzpicture}[scale=1]
\node (00) at (0,0) {$\emptyset$};
\node (10) at (1,0) {$\emptyset$};
\node (20) at (2,0) {$\emptyset$};
\node (30) at (3,0) {$\emptyset$};
\node (40) at (4,0) {$\emptyset$};
\node (50) at (5,0) {$\emptyset$};
\node (01) at (0,1) {$\emptyset$};
\node (.51.5) at (.5,1.5) {$X$};
\node (11) at (1,1) {$\emptyset$};
\node (21) at (2,1) {$1$};
\node (31) at (3,1) {$1$};
\node (41) at (4,1) {$1$};
\node (51) at (5,1) {$1$};
\node (02) at (0,2) {$\emptyset$};
\node (12) at (1,2) {$1$};
\node (22) at (2,2) {$11$};
\node (32) at (3,2) {$11$};
\node (1.50.5) at (1.5,0.5) {$X$};
\node (42) at (4,2) {$11$};
\node (52) at (5,2) {$11$};
\node (03) at (0,3) {$\emptyset$};
\node (13) at (1,3) {$1$};
\node (23) at (2,3) {$11$};
\node (33) at (3,3) {$21$};
\node (43) at (4,3) {$21$};
\node (53) at (5,3) {$21$};
\node (2.52.5) at (2.5,2.5) {$X$};
\node (3.52.5) at (3.5,2.5) {$X$};
\node (4.50.5) at (4.5,0.5) {$X$};
\node (5,51.5) at (5.5,1.5) {$X$};
\node (60) at (6,0) {$\emptyset$};
\node (61) at (6,1) {$1$};
\node (62) at (6,2) {$21$};
\node (63) at (6,3) {$22$};

\node (3.53.1) at (3.5,3.15) {\tiny 1};
\node (4.51.1) at (4.5,1.15) {\tiny 1};
\node (4.52.1) at (4.5,2.15) {\tiny 2};
\node (4.53.1) at (4.5,3.15) {\tiny 2};

\draw (00)--(10)--(20)--(30)--(40)--(50)--(60)
(01)--(11)--(21)--(31)--(41)--(51)--(61)
(02)--(12)--(22)--(32)--(42)--(52)--(62)
(03)--(13)--(23)--(33)--(43)--(53)--(63)

(00)--(01)--(02)--(03)
(10)--(11)--(12)--(13)
(20)--(21)--(22)--(23)
(30)--(31)--(32)--(33)
(40)--(41)--(42)--(43)
(50)--(51)--(52)--(53)
(60)--(61)--(62)--(63);
\end{tikzpicture}
$$
\caption{The Hecke growth diagram of the word $213312$.}
\label{fig:8}
\end{figure}

In the Hecke growth diagram of a word $w$, the sequence of partitions 
$\emptyset=\mu_0\subseteq\mu_1\subseteq\ldots\subseteq\mu_k$ along the upper border of 
the Hecke growth diagram corresponds to a set-valued tableau $Q(w)$:
if $\mu_{i-1}\neq\mu_i$, then we put $i$ into the square by which $\mu_{i-1}$ and $\mu_i$ differ.
Otherwise, $\mu_{i-1}=\mu_i$ and the edge between $\mu_{i-1}$ and $\mu_i$ is labeled $c$,
then we put $i$ into the box at the end of row $c$ of $\mu_{i-1}$.
We also have the sequence of partitions 
$\emptyset=\nu_0\subseteq\nu_1\subseteq\ldots\subseteq\nu_n$
along the right border of the Hecke growth diagram corresponds to 
an increasing tableau $P(w)$:
we put $i$'s into the squares of $\nu_i/\nu_{i-1}$.
For example, from the chains of partitions on the rightmost edge and uppermost edge of the diagram in Figure~\ref{fig:8},
we obtain the insertion and recording tableau in Figure~\ref{fig:PQ}.

\begin{figure}[h]
$$
\Einheit.2cm
\PfadDicke{.5pt}
\Pfad(0,0),222222\endPfad
\Pfad(3,0),222222\endPfad
\Pfad(6,0),222222\endPfad
\Pfad(0,0),111111\endPfad
\Pfad(0,3),111111\endPfad
\Pfad(0,6),111111\endPfad
\Label\ro{1}(1,4)
\Label\ro{2}(4,4)
\Label\ro{2}(1,1)
\Label\ro{3}(4,1)
\Label\ro{P(213312)=}(-6,2.5)
\hbox{\hskip5cm}
\Einheit.2cm
\PfadDicke{.5pt}
\Pfad(0,0),222222\endPfad
\Pfad(3,0),222222\endPfad
\Pfad(6,0),222222\endPfad
\Pfad(0,0),111111\endPfad
\Pfad(0,3),111111\endPfad
\Pfad(0,6),111111\endPfad
\Label\ro{1}(1,4)
\Label\ro{3,4}(4,4)
\Label\ro{2,5}(1,1)
\Label\ro{6}(4,1)
\Label\ro{Q(213312)=}(-6,2.5)
\hskip1.8cm
$$
\caption{Hecke insertion and recording tableaux obtained from the growth diagram.}
\label{fig:PQ}
\end{figure}

In \cite{PP}, Patrias and Pylyavskyy also formulate the {\it backward local rules},
that is, given $\gamma$, $\mu$, $\nu$ and the edge label between $\mu$ and $\gamma$,
one can reconstruct $\lambda$, the edge label between $\lambda$ and $\nu$,
and the filling of the square.

\begin{thm} \label{Thm4} \cite[Theorem 4.16 ]{PP}
For any word $w$, the increasing tableau $P(w)$ and set-valued tableau $Q(w)$
obtained from the sequence of partitions along the right border of the Hecke growth diagram for $w$
and along the upper border of the Hecke growth diagram for $w$, respectively, are the Hecke insertion tableau and the Hecke recording tableau for $w$.
\end{thm}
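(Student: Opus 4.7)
The plan is to prove this by induction on the length $n$ of the word $w = w_1 w_2 \cdots w_n$. The inductive hypothesis will be that for every $k \le n$, the sub-diagram obtained by keeping only the first $k$ columns has $P(w_1 \cdots w_k)$ along its right border and $Q(w_1 \cdots w_k)$ along its upper border. The base case $k = 0$ is immediate since every corner is labeled by $\emptyset$, and the goal of the inductive step is to show that adding column $k+1$ and filling in its squares from bottom to top via the forward local rules simulates one execution of Hecke insertion of $w_{k+1}$.

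The natural semantic interpretation to maintain is that, at a square at height $i$ with labels $\lambda$, $\mu$, $\nu$, $\gamma$ as in Figure~\ref{fig:7}, the partition $\mu$ is the shape of the Hecke insertion tableau of the subword of $w_1 \cdots w_k$ consisting of letters at most $i$, and $\gamma$ is the corresponding shape after also processing $w_{k+1}$. An edge label $r$ between two equal partitions encodes a row-insertion that terminated with $\alpha = 0$, with its special corner at the end of row $r$. Under this interpretation, each forward rule should match a distinct case of Hecke row-insertion: (F1) creates a new box; (F2) records an $\alpha=0$ termination already in row $1$; (F3)--(F4) handle the cases where nothing is bumped into the current row so $\gamma$ is forced by $\mu \cup \nu$; and (F5)--(F10) describe bumping, possibly in the presence of an incoming edge label from below that signals a prior $\alpha = 0$ termination.

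The main obstacle is the case analysis for the edge-label rules (F6)--(F10), which must correctly propagate the row index of the special corner through all rows above it. I would verify each rule by direct comparison with the corresponding clause of the Hecke row-insertion rules stated earlier in the excerpt: the four-way split among (F7)--(F10), based on whether a box of $\mu/\nu$ appears immediately to the right of, immediately below, both, or neither of the inner corner flagged by the incoming edge label, should match precisely the four ways Hecke insertion in row $i+1$ can interact with a previous $\alpha = 0$ outcome from row $i$. Once this matching is verified for all ten rules, the inductive step is complete, and reading off the top border of the full diagram yields $Q(w)$ while the right border yields $P(w)$, as prescribed just before the theorem.
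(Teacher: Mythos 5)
There is no internal proof in the paper to compare against: the statement is imported verbatim as Theorem~4.16 of Patrias and Pylyavskyy~\cite{PP}, and the paper only uses it. That said, your outline is in spirit the strategy of the cited source: induct column by column, interpret the vertex label at height $i$ as the shape of the Hecke insertion tableau of the subword of letters at most $i$, and interpret an edge label $r$ as recording an $\alpha=0$ termination whose special corner lies at the end of row $r$. Those are the right invariants.

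The genuine gap is in the step you defer, namely verifying (F1)--(F10) ``by direct comparison with the corresponding clause of the Hecke row-insertion rules.'' This comparison does not go through directly, because the rows of the growth-diagram rectangle index \emph{letter values}, whereas the clauses of Hecke insertion describe bumping through the \emph{rows of the tableau}; a single insertion of $w_{k+1}$ touches many letter values at once, so a square at height $i$ does not correspond to any one clause. To carry out the inductive step you need two further ingredients: (i) compatibility of Hecke insertion with restriction of the alphabet, i.e.\ that $P\bigl((w_1\cdots w_k)|_{\le i}\bigr)$ is obtained from $P(w_1\cdots w_k)$ by deleting the boxes with entries greater than $i$ (the iterated form of Proposition~\ref{Pro4}), so that your column of shapes actually determines the restricted tableaux; and (ii) a comparison lemma describing how inserting $w_{k+1}$ into $P\bigl(u|_{\le i}\bigr)$ differs from inserting it into $P\bigl(u|_{\le i-1}\bigr)$ --- where the new box, or in the $\alpha=0$ case the special corner and its row, of the former sits relative to that of the latter, and how that row propagates upward through the values above $w_{k+1}$. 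This is precisely what (F5)--(F10) encode, and it is the substantive content of the theorem; in~\cite{PP} it is established through a sequence of lemmas, not by matching rules to insertion clauses one-for-one. You would also need to argue that the data retained at a square (three shapes and one edge label, as in Figure~\ref{fig:7}) suffices to determine the outcome, a determinism claim that is not automatic since different tableaux sharing these shapes could a priori behave differently. Without these ingredients the case analysis you postpone is not a routine verification, so the proposal as written is an accurate plan but not yet a proof.
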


\subsection{Jeu de Taquin for Increasing Tableaux} 

In this section, we give the description of jeu de taquin for increasing tableaux which was introduced
by Thomas and Yong in \cite{TY09}.

An {\it increasing tableau} $T$ of skew shape $\lambda/\mu$ is 
a $\mathbb{N}$-filling of $\lambda/\mu$ assigning to each box a positive integer
such that the entries of $T$ strictly increasing down columns and from left to right along rows.
Let $\mathrm{INC}(\lambda/\mu)$ be the set of these increasing tableaux.

For $T\in \mathrm{INC}(\lambda/\mu)$, an {\it inner corner} is a maximally southeast box of $\mu$.
Let $C$ be a set of some inner corners of $\mu$, and mark the boxes in $C$ with $\bullet$.
The {\it jeu de taquin} for $T$ and $C$ can be described as follows.
We begin with the shape $T_1$ which is made of boxes with entries $\bullet$ and $1$ in $T\cup C$.
If $1$ is directly below or right of $\bullet$, then we swap $\bullet$ and $1$ in $T_1$,
otherwise we keep the same labels for these boxes in $T_1$. 
Next we consider the shape $T_2$ which is made of boxes with entries $\bullet$ and $2$.
If $2$ is directly below or right of $\bullet$, then we swap $\bullet$ and $2$ in $T_2$,
otherwise we keep the same labels for these boxes in $T_2$. 
We repeat the above processes until the $\bullet$'s become the inner corners of $\lambda$.
The final placement of the numerical entries gives the jeu de taquin for $T$ and $C$ and we write $jdt_C(T)$.
It is easy to see that $jdt_C(T)$ is an increasing tableau as well.

\begin{exa} \label{Ex3} 
Let $T$ be an increasing tableau as given in Figure~\ref{fig:5} and $C$ indicated with $\bullet$. 
See Figure~\ref{fig:5}e for $jdt_C(T)$.
\end{exa}

\begin{figure}[h]
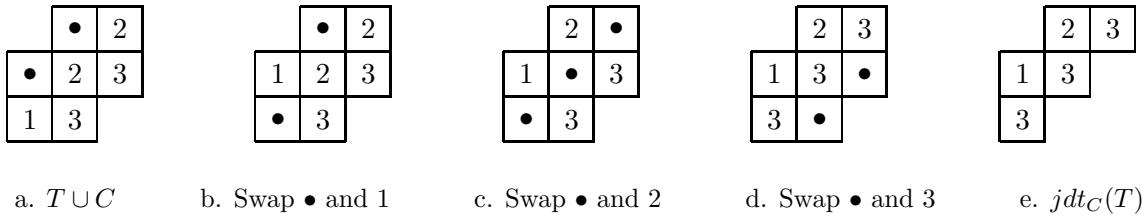

$$
\Einheit.2cm
\PfadDicke{.5pt}
\Pfad(0,0),222222\endPfad
\Pfad(3,0),222222222\endPfad
\Pfad(6,0),222222222\endPfad
\Pfad(9,3),222222\endPfad
\Pfad(0,0),111111\endPfad
\Pfad(0,3),111111111\endPfad
\Pfad(0,6),111111111\endPfad
\Pfad(3,9),111111\endPfad
\Label\ro{\bullet}(1,4)
\Label\ro{1}(1,1)
\Label\ro{\bullet}(4,7)
\Label\ro{2}(4,4)
\Label\ro{3}(4,1)
\Label\ro{2}(7,7)
\Label\ro{3}(7,4)
\hbox{\hskip3.3cm}
\Einheit.2cm
\PfadDicke{.5pt}
\Pfad(0,0),222222\endPfad
\Pfad(3,0),222222222\endPfad
\Pfad(6,0),222222222\endPfad
\Pfad(9,3),222222\endPfad
\Pfad(0,0),111111\endPfad
\Pfad(0,3),111111111\endPfad
\Pfad(0,6),111111111\endPfad
\Pfad(3,9),111111\endPfad
\Label\ro{1}(1,4)
\Label\ro{\bullet}(1,1)
\Label\ro{\bullet}(4,7)
\Label\ro{2}(4,4)
\Label\ro{3}(4,1)
\Label\ro{2}(7,7)
\Label\ro{3}(7,4)
\hbox{\hskip3.3cm}
\Einheit.2cm
\PfadDicke{.5pt}
\Pfad(0,0),222222\endPfad
\Pfad(3,0),222222222\endPfad
\Pfad(6,0),222222222\endPfad
\Pfad(9,3),222222\endPfad
\Pfad(0,0),111111\endPfad
\Pfad(0,3),111111111\endPfad
\Pfad(0,6),111111111\endPfad
\Pfad(3,9),111111\endPfad
\Label\ro{1}(1,4)
\Label\ro{\bullet}(1,1)
\Label\ro{2}(4,7)
\Label\ro{\bullet}(4,4)
\Label\ro{3}(4,1)
\Label\ro{\bullet}(7,7)
\Label\ro{3}(7,4)
\hbox{\hskip3.3cm}
\Einheit.2cm
\PfadDicke{.5pt}
\Pfad(0,0),222222\endPfad
\Pfad(3,0),222222222\endPfad
\Pfad(6,0),222222222\endPfad
\Pfad(9,3),222222\endPfad
\Pfad(0,0),111111\endPfad
\Pfad(0,3),111111111\endPfad
\Pfad(0,6),111111111\endPfad
\Pfad(3,9),111111\endPfad
\Label\ro{1}(1,4)
\Label\ro{3}(1,1)
\Label\ro{2}(4,7)
\Label\ro{3}(4,4)
\Label\ro{\bullet}(4,1)
\Label\ro{3}(7,7)
\Label\ro{\bullet}(7,4)
\hbox{\hskip3.3cm}
\Einheit.2cm
\PfadDicke{.5pt}
\Pfad(0,0),222222\endPfad
\Pfad(3,0),222222222\endPfad
\Pfad(6,3),222222\endPfad
\Pfad(9,6),222\endPfad
\Pfad(0,0),111\endPfad
\Pfad(0,3),111111\endPfad
\Pfad(0,6),111111111\endPfad
\Pfad(3,9),111111\endPfad
\Label\ro{1}(1,4)
\Label\ro{3}(1,1)
\Label\ro{2}(4,7)
\Label\ro{3}(4,4)
\Label\ro{3}(7,7)
\hskip2cm
$$
\centerline{\small a. $T\cup C$
\hskip1cm
b. Swap $\bullet$ and $1$
\hskip1cm
c. Swap $\bullet$ and $2$
\hskip1cm
d. Swap $\bullet$ and $3$
\hskip1cm
e. $jdt_C(T)$}
\caption{An example of jeu de taquin for increasing tableaux.}
\label{fig:5}
\end{figure}

 \emph{Reverse jeu de taquin} for $T^\prime\in \mathrm{INC}(\lambda/\mu)$,
where $\bullet$'s are located in \emph{outer corners} - the maximally northwest boxes in $\gamma/\lambda$ - is performed in a similar manner.
Let $C^\prime$ be a set of some outer corners, and mark the boxes in $C^\prime$ with $\bullet$.
We begin with the shape $T_n^\prime$ which is made of boxes with entries $\bullet$ and $n$ (the maximum value of $T^\prime$)
in $T^\prime\cup C^\prime$.
If $n$ is directly above or left of $\bullet$, then we swap $\bullet$ and $n$ in $T_n^\prime$,
otherwise we keep the same labels for these boxes in $T_n^\prime$. 
Next we consider the shape $T_{n-1}^\prime$ which is made of boxes with entries $\bullet$ and $n-1$.
If $n-1$ is directly above or left of $\bullet$, then we swap $\bullet$ and $n-1$ in $T_{n-1}^\prime$,
otherwise we keep the same labels for these boxes in $T_{n-1}^\prime$. 
We can repeat the above processes until the $\bullet$'s become the outer corners of $\lambda/\mu$.
The final placement of the numerical entries gives the reverse jeu de taquin for $T^\prime$ and $C^\prime$ and write $revjdt_{C^\prime}(T^\prime)$. See Figure~\ref{fig:6} for an example.

\begin{figure}[h]
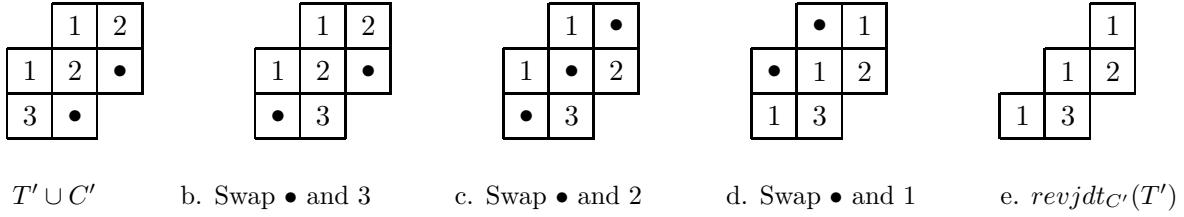

$$
\Einheit.2cm
\PfadDicke{.5pt}
\Pfad(0,0),222222\endPfad
\Pfad(3,0),222222222\endPfad
\Pfad(6,0),222222222\endPfad
\Pfad(9,3),222222\endPfad
\Pfad(0,0),111111\endPfad
\Pfad(0,3),111111111\endPfad
\Pfad(0,6),111111111\endPfad
\Pfad(3,9),111111\endPfad
\Label\ro{1}(1,4)
\Label\ro{3}(1,1)
\Label\ro{1}(4,7)
\Label\ro{2}(4,4)
\Label\ro{\bullet}(4,1)
\Label\ro{2}(7,7)
\Label\ro{\bullet}(7,4)
\hbox{\hskip3.3cm}
\Einheit.2cm
\PfadDicke{.5pt}
\Pfad(0,0),222222\endPfad
\Pfad(3,0),222222222\endPfad
\Pfad(6,0),222222222\endPfad
\Pfad(9,3),222222\endPfad
\Pfad(0,0),111111\endPfad
\Pfad(0,3),111111111\endPfad
\Pfad(0,6),111111111\endPfad
\Pfad(3,9),111111\endPfad
\Label\ro{1}(1,4)
\Label\ro{\bullet}(1,1)
\Label\ro{1}(4,7)
\Label\ro{2}(4,4)
\Label\ro{3}(4,1)
\Label\ro{2}(7,7)
\Label\ro{\bullet}(7,4)
\hbox{\hskip3.3cm}
\Einheit.2cm
\PfadDicke{.5pt}
\Pfad(0,0),222222\endPfad
\Pfad(3,0),222222222\endPfad
\Pfad(6,0),222222222\endPfad
\Pfad(9,3),222222\endPfad
\Pfad(0,0),111111\endPfad
\Pfad(0,3),111111111\endPfad
\Pfad(0,6),111111111\endPfad
\Pfad(3,9),111111\endPfad
\Label\ro{1}(1,4)
\Label\ro{\bullet}(1,1)
\Label\ro{1}(4,7)
\Label\ro{\bullet}(4,4)
\Label\ro{3}(4,1)
\Label\ro{\bullet}(7,7)
\Label\ro{2}(7,4)
\hbox{\hskip3.3cm}
\Einheit.2cm
\PfadDicke{.5pt}
\Pfad(0,0),222222\endPfad
\Pfad(3,0),222222222\endPfad
\Pfad(6,0),222222222\endPfad
\Pfad(9,3),222222\endPfad
\Pfad(0,0),111111\endPfad
\Pfad(0,3),111111111\endPfad
\Pfad(0,6),111111111\endPfad
\Pfad(3,9),111111\endPfad
\Label\ro{\bullet}(1,4)
\Label\ro{1}(1,1)
\Label\ro{\bullet}(4,7)
\Label\ro{1}(4,4)
\Label\ro{3}(4,1)
\Label\ro{1}(7,7)
\Label\ro{2}(7,4)
\hbox{\hskip3.3cm}
\Einheit.2cm
\PfadDicke{.5pt}
\Pfad(0,0),222\endPfad
\Pfad(3,0),222222\endPfad
\Pfad(6,0),222222222\endPfad
\Pfad(9,3),222222\endPfad
\Pfad(0,0),111111\endPfad
\Pfad(0,3),111111111\endPfad
\Pfad(3,6),111111\endPfad
\Pfad(6,9),111\endPfad
\Label\ro{1}(1,1)
\Label\ro{1}(4,4)
\Label\ro{3}(4,1)
\Label\ro{1}(7,7)
\Label\ro{2}(7,4)
\hskip2cm
$$
\centerline{\small a. $T^\prime\cup C^\prime$
\hskip1cm
b. Swap $\bullet$ and $3$
\hskip1cm
c. Swap $\bullet$ and $2$
\hskip1cm
d. Swap $\bullet$ and $1$
\hskip1cm
e. $revjdt_{C^\prime}(T^\prime)$}
\caption{An example of reverse jeu de taquin for increasing tableaux.}
\label{fig:6}
\end{figure}

Two increasing skew tableaux $T$ and $T^\prime$ are
{\it $K$-jeu de taquin equivalent} if $T$ can be obtained by applying a sequence of 
jeu de taquin or reverse jeu de taquin operations to $T^\prime$~\cite{BuchSamuel}. 

\subsection{$K$-Knuth Equivalence on Words and Increasing Tableaux}

Two permutations have the same insertion tableau by the standard RSK algorithm if and only
if they are {\it Knuth equivalent}, i.e., one can be obtained from the other via a finite series of 
applications of the Knuth relations: (1) for $x<y<z$, $xzy\sim zxy$; (2) for $x<y<z$, $yxz\sim yzx$.
Knuth equivalence on permutations has a generalization referred to as $K$-Knuth equivalence on words,
which was defined by Buch and Samuel in ~\cite{BuchSamuel} and motivated by~\cite{TY09}.

Two words are said to be {\it $K$-Knuth equivalent} if one can be obtained from the other via a finite series
of applications of the following $K$-Knuth relations:
\begin{align*}
xzy &\equiv zxy,\qquad (x < y < z) \\
yxz &\equiv yzx,\qquad (x < y < z) \\
x &\equiv xx, \\
xyx &\equiv yxy.
\end{align*}

We say that two words are  \emph{Hecke insertion equivalent} if they have the same Hecke insertion tableau. Hecke insertion equivalence implies $K$-Knuth equivalence~\cite{BuchSamuel}. However, unlike the RSK case, the converse is not true - $K$-Knuth equivalent words may have different Hecke insertion tableaux.

Let $T$ be an increasing tableau, we write $\mathrm{row}(T)$ for the reading word of $T$,
which is obtained by reading the entries of $T$ from left to right along each row,
starting from the bottom row and moving upward. 
For example, the reading word of $P(w)$ in Figure~\ref{fig:3} is $324123$.
For two increasing tableaux $T$ and $T^\prime$, if $\mathrm{row}(T)\equiv\mathrm{row}(T^\prime)$,
we say $T$ is {\it $K$-Knuth equivalent} to $T^\prime$, and write $T\equiv T^\prime$.
We list some results which will be used for the proof of 
the main theorem in Section~\ref{S:proof}.

\begin{lem} \label{Pro1} \cite[Lemma 5.5]{BuchSamuel}
Let $[a,b]$ be an integer interval. Let $w_1$ and $w_2$ be $K$-Knuth equivalent words. 
For $i=1,2$, let $w_i|_{[a,b]}$ be the word obtained from $w_i$ by deleting all integers not 
contained in the interval $[a,b]$. Then $w_1|_{[a,b]}$ and $w_2|_{[a,b]}$ are
$K$-Knuth equivalent words.
\end{lem}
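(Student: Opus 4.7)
The plan is to induct on the length of a chain of single $K$-Knuth moves witnessing $w_1 \equiv w_2$. Since $K$-Knuth equivalence is by definition generated by the four basic relations applied to contiguous subwords, it suffices to establish the claim when $w_1$ and $w_2$ differ by a single such move; the general statement then follows by concatenating the corresponding chains between successive restrictions.

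For the single-move case, write $w_1 = u \alpha v$ and $w_2 = u \beta v$, where $\alpha \to \beta$ is one of the four basic substitutions. Since restriction to $[a,b]$ commutes with concatenation,
\[ w_i|_{[a,b]} = u|_{[a,b]} \cdot \alpha|_{[a,b]} \cdot v|_{[a,b]}, \qquad i \in \{1,2\}, \]
so it is enough to show $\alpha|_{[a,b]} \equiv \beta|_{[a,b]}$ as words. I would proceed by a case analysis over the four relations and the subset of the involved letters that survive the restriction.

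For the two Knuth-type relations $xzy \equiv zxy$ and $yxz \equiv yzx$ with $x<y<z$, a direct inspection shows that deleting any subset of $\{x,y,z\}$ produces the \emph{same} string on both sides, so the restrictions are literally equal. Observe that the linear order $x<y<z$ rules out some potentially awkward subcases: for instance, $y$ alone cannot be deleted, since $y \notin [a,b]$ forces either $x<y<a$ or $z>y>b$, hence excludes $x$ or $z$ as well. The relation $x \equiv xx$ trivially restricts either to itself (when $x \in [a,b]$) or to $\emptyset \equiv \emptyset$.

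The one place that requires genuine work is the fourth relation, $xyx \equiv yxy$. If both $x,y \in [a,b]$ or neither lies in $[a,b]$, the two restrictions coincide. The interesting subcases are when exactly one of $x,y$ is deleted: if $x \notin [a,b]$ then $(xyx)|_{[a,b]} = y$ while $(yxy)|_{[a,b]} = yy$, and if $y \notin [a,b]$ then $(xyx)|_{[a,b]} = xx$ while $(yxy)|_{[a,b]} = x$. In both subcases the restricted strings are $K$-Knuth equivalent by a single application of the duplication relation $t \equiv tt$. Thus the braid-type relation is not preserved verbatim under restriction, but the discrepancy is always absorbed by relation 3; this is the step I expect to be the only mild subtlety. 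Once this case analysis is tabulated, the induction closes.
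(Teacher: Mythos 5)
Your argument is correct: reducing to a single application of a generating relation (using that restriction to $[a,b]$ commutes with concatenation and that $K$-Knuth equivalence is a congruence), and then checking that each relation either restricts verbatim or, in the case $xyx \equiv yxy$ with exactly one of $x,y$ surviving, reduces to an instance of $t \equiv tt$, is exactly the standard argument, and you correctly isolate the one place the interval hypothesis is essential, namely that the middle letter of $xzy\equiv zxy$ or $yxz \equiv yzx$ cannot be deleted without also deleting $x$ or $z$. The paper does not prove this lemma itself but cites it from Buch and Samuel, whose proof proceeds along essentially these same lines, so there is nothing to add beyond noting that your phrase ``produces the same string on both sides'' should exclude the trivial subcase where no letter is deleted, in which the restricted words are equal only up to the original relation.
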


\begin{thm} \label{Thm3} \cite[Theorem 6.2]{BuchSamuel}
Let $T$ and $T^\prime$ be increasing tableaux. Then $T$ and $T^\prime$ are $K$-Knuth equivalent
if and only if $T$ and $T^\prime$ are $K$-jeu de taquin equivalent.
\end{thm}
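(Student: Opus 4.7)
The plan is to prove the two implications separately. The forward direction, that $K$-jdt equivalence implies $K$-Knuth equivalence, should follow from a direct local analysis of a single slide. The backward direction, that $K$-Knuth equivalence implies $K$-jdt equivalence, I would prove through rectification.

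For the forward direction, I would decompose any $K$-jdt equivalence into a sequence of single (possibly reverse) slides, and each such slide into a sequence of elementary $\bullet$-swaps with the entry directly to its right or directly below. For each elementary swap, I would compare the reading words before and after the swap. Generically, the swap interchanges two adjacent letters in the reading word in a manner justified by the two commutation relations $xzy \equiv zxy$ and $yxz \equiv yzx$. The remaining relations $x \equiv xx$ and $xyx \equiv yxy$ should arise from the distinctively $K$-theoretic configurations in which two equal entries are merged (in a forward slide) or a single entry is duplicated (in a reverse slide). A finite case analysis over the local configurations near the moving $\bullet$ should complete this direction.

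For the backward direction, I would define the \emph{rectification} $\mathrm{rect}(T)$ of a skew increasing tableau $T$ to be the straight-shape increasing tableau obtained by successively sliding all inner corners to a straight shape. The argument then relies on two auxiliary facts:

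\begin{itemize}
\item[(A)] Rectification is well-defined, i.e., independent of the order of slides. This is the main result of~\cite{TY09} in the $K$-theoretic setting.
\item[(B)] Two straight-shape increasing tableaux with $K$-Knuth equivalent reading words must be equal.
\end{itemize}

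Granted both, suppose $\mathrm{row}(T) \equiv \mathrm{row}(T')$. Since rectification is a sequence of jdt slides, the forward direction yields $\mathrm{row}(\mathrm{rect}(T)) \equiv \mathrm{row}(T) \equiv \mathrm{row}(T') \equiv \mathrm{row}(\mathrm{rect}(T'))$, and (B) then forces $\mathrm{rect}(T) = \mathrm{rect}(T')$. Since $T$ is $K$-jdt equivalent to $\mathrm{rect}(T)$ by construction, and similarly for $T'$, we conclude that $T$ and $T'$ are $K$-jdt equivalent.

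I expect the main obstacle to be (B). A tempting approach is via Hecke insertion: one can directly check that $P(\mathrm{row}(S)) = S$ for any straight-shape increasing tableau $S$. If the Hecke insertion tableau were a $K$-Knuth invariant, (B) would follow immediately. However, $K$-Knuth equivalence is strictly coarser than Hecke insertion equivalence, so more care is needed. I would try induction on the largest entry, using Lemma~\ref{Pro1} to restrict $K$-Knuth equivalences to subintervals of values and exploit the rigidity of straight-shape increasing tableaux---whose row- and column-strictness makes them sharply constrained by their entry multisets once the alphabet is small. Alternatively, one might give a direct combinatorial argument showing that no single $K$-Knuth move can convert the reading word of one straight-shape increasing tableau into that of a genuinely different one, but this would again require careful bookkeeping over all four relations.
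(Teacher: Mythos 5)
The paper itself offers no proof of this statement—it is quoted directly from \cite[Theorem 6.2]{BuchSamuel}—so the comparison can only be with the known argument there; judged on its own terms, your proposal has a fatal problem in the backward direction. Both auxiliary facts are false. (A) misstates the content of \cite{TY09}: $K$-rectification of an increasing skew tableau is \emph{not} independent of the order of slides; Thomas and Yong prove well-definedness only relative to special rectification targets (superstandard tableaux), and Buch and Samuel introduce ``unique rectification targets'' precisely because uniqueness fails for general increasing tableaux. (B) is also false, and is in fact contradicted by statements already in this paper: if (B) held, then for any words $w_1\equiv w_2$, Lemma~\ref{Pro3} would give $\mathrm{row}(P(w_1))\equiv w_1\equiv w_2\equiv \mathrm{row}(P(w_2))$, and (B) would force $P(w_1)=P(w_2)$, i.e.\ the Hecke insertion tableau would be a $K$-Knuth invariant. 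But Section~\ref{S:tools} notes explicitly (following \cite{BuchSamuel}) that $K$-Knuth equivalent words may have different Hecke insertion tableaux, and since $P(\mathrm{row}(S))=S$ for a straight-shape increasing tableau $S$, any such pair of words produces two \emph{distinct} straight-shape increasing tableaux in the same $K$-Knuth class. (Determining which $K$-Knuth classes contain a unique increasing tableau is exactly the subject of \cite{GMPPRST}.) Consequently your reduction via $\mathrm{rect}(T)=\mathrm{rect}(T')$ collapses, and your fallback plans for (B)—induction on the largest entry via Lemma~\ref{Pro1}, or excluding single $K$-Knuth moves between reading words of distinct straight shapes—are attempts to prove a false statement. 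A correct proof of this direction, as in \cite{BuchSamuel}, must connect $T$ and $T'$ by an explicit chain of slides and cannot deduce equality of rectifications from $K$-Knuth equivalence alone.

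The forward direction is the right (and true) statement—slides preserve the $K$-Knuth class of the reading word—but your local picture is too naive as written: a horizontal $\bullet$-swap leaves the reading word unchanged, while a vertical swap moves a letter from one row to the row above, so in the reading word it jumps over an entire segment rather than exchanging with an adjacent letter; justifying that jump requires the more careful slide-path analysis (as in the classical Sch\"utzenberger theory, plus the genuinely $K$-theoretic merging/duplication cases you mention). That part is repairable; the backward direction as proposed is not.
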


\begin{prop} \label{Pro2} \cite[Proposition 37]{GMPPRST}
If $w_1\equiv w_2$ then $\mathrm{lis}(w_1) = \mathrm{lis}(w_2)$ and $\mathrm{lds}(w_1) = \mathrm{lds}(w_2)$.
\end{prop}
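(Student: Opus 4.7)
The plan is to verify that each of the four defining $K$-Knuth relations preserves both $\mathrm{lis}$ and $\mathrm{lds}$; the statement then follows by transitivity, since $w_1\equiv w_2$ means $w_2$ is obtained from $w_1$ by a finite sequence of such relations.

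For the first two relations $xzy\equiv zxy$ and $yxz\equiv yzx$ with $x<y<z$, I would invoke classical Knuth theory: these are precisely the elementary Knuth transformations, which preserve the ordinary RSK insertion tableau, and by Greene's theorem the length of a longest strictly increasing (respectively decreasing) subsequence equals the length of the first row (respectively first column) of that tableau. The relation $x\equiv xx$ merely inserts or deletes a letter adjacent to an equal letter; since every strictly monotone subsequence uses distinct values, at most one of the two equal letters contributes, so neither $\mathrm{lis}$ nor $\mathrm{lds}$ changes. All three of these relations are therefore easy.

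The remaining relation $xyx\equiv yxy$ (assume $x<y$ without loss of generality) requires a direct check. Writing $w=\alpha\cdot xyx\cdot\beta$ and $w'=\alpha\cdot yxy\cdot\beta$ with the triple at local positions $p,p+1,p+2$, I take a longest strictly increasing subsequence $\sigma$ of $w$ and split by how many of the local positions $\sigma$ uses. If zero, $\sigma$ is already a subsequence of $w'$. If exactly one, one replaces the used local entry by an equal-valued entry at a neighbouring local position of $w'$: an $x$ at position $p$ in $w$ is swapped for the $x$ at position $p+1$ in $w'$, while the two $y$-positions in $w'$ handle the $y$-entry case. If exactly two, the only increasing possibility is the pair $(x,y)$ at $(p,p+1)$ in $w$, which is exchanged for the pair $(x,y)$ at $(p+1,p+2)$ in $w'$. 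In every case the letters of $\sigma$ outside the triple lie in $\alpha$ or $\beta$ at untouched positions, so the monotonicity at the boundary survives, giving $\mathrm{lis}(w)\le\mathrm{lis}(w')$. Symmetry yields equality, and the $\mathrm{lds}$ argument runs in parallel using the decreasing pair $(y,x)$, which sits at $(p+1,p+2)$ in $w$ and at $(p,p+1)$ in $w'$.

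The main obstacle is the bookkeeping for this last relation: one must confirm that the entry of $\sigma$ immediately before or immediately after the local triple remains compatible with the swapped entry, and that no accidental collision of equal values destroys strict monotonicity. What rescues the argument is that both sides of the relation contain the same multiset of local values and the triple spans only three positions, so any external entry of $\sigma$ lies wholly to the left or to the right of the triple and need only be compared with the single local entry used in the replacement.
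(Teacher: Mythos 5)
Your overall strategy---verify that each of the four generating $K$-Knuth relations preserves $\mathrm{lis}$ and $\mathrm{lds}$, then conclude by transitivity---is exactly the natural proof; the paper itself gives no argument but simply cites \cite[Proposition 37]{GMPPRST}, where the proof is the same relation-by-relation check. Your handling of $x\equiv xx$ and of $xyx\equiv yxy$ (including the boundary bookkeeping with $\alpha$ and $\beta$) is correct.

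The one genuine flaw is the justification for the two Knuth-type relations. For a word with repeated letters, Greene's theorem applied to the RSK insertion tableau identifies the first row length with the longest \emph{weakly} increasing subsequence, not the strictly increasing one: for instance $w=11$ has a one-row tableau of length $2$ while $\mathrm{lis}(w)=1$. Repeated letters are unavoidable in this setting (the relation $x\equiv xx$ creates them, and the words coming from fillings have them), so the sentence ``$\mathrm{lis}$ equals the length of the first row'' is false in general and does not establish invariance of $\mathrm{lis}$ under $xzy\equiv zxy$ and $yxz\equiv yzx$; the first-column statement for $\mathrm{lds}$ is fine. The repair is easy and in the same style as your fourth case: with $x<y<z$ a strictly increasing subsequence can use at most two of the three local letters, the usable pairs are $(x,z)$ or $(x,y)$ in $xzy$ versus $(x,y)$ in $zxy$ (respectively $(x,z)$ or $(y,z)$ in $yxz$ versus $(y,z)$ in $yzx$), and replacing $(x,z)$ by $(x,y)$ (respectively by $(y,z)$) stays compatible with the entries outside the window precisely because $x<y<z$; the decreasing case is symmetric. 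With that substitution in place of the appeal to Greene's theorem, your proof is complete and coincides with the cited argument.
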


\begin{lem} \label{Pro3} \cite[Lemma 58]{GMPPRST}
If $w$ be a word and $P(w)$ be the Hecke insertion tableau of $w$, 
then $w$ and $\mathrm{row}(P(w))$ are $K$-Knuth equivalent.
\end{lem}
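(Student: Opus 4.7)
The plan is to induct on the length $n$ of the word $w = w_1 \cdots w_n$. The base case $n=0$ is immediate. For the inductive step, set $w' = w_1 \cdots w_{n-1}$, so that by definition $P(w) = P(w') \xleftarrow{\,\mathrm{H}} w_n$. Because each $K$-Knuth relation acts only on a short window of consecutive letters, $K$-Knuth equivalence is stable under concatenation on either side, and the inductive hypothesis $w' \equiv \mathrm{row}(P(w'))$ therefore gives $w \equiv \mathrm{row}(P(w')) \cdot w_n$. The lemma is thus reduced to the single-insertion statement: for every increasing tableau $T$ and every positive integer $x$,
\[
\mathrm{row}(T) \cdot x \;\equiv\; \mathrm{row}(T \xleftarrow{\,\mathrm{H}} x).
\]

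To prove this reduced statement, I would track the Hecke insertion of $x$ through $T$ one row at a time, running a secondary induction on the number of rows of $T$. Write $T$ as a top row $R$ above a smaller tableau $T''$, so that $\mathrm{row}(T) = \mathrm{row}(T'') \cdot R$. The insertion first acts on $R$, producing a new top row $R'$ together with either no output (the algorithm terminates) or a bumped value $y$ which is then inserted into $T''$. Using stability of $\equiv$ under concatenation and the secondary inductive hypothesis applied to $T''$ and $y$, it suffices to establish the local equivalence
\[
\mathrm{row}(T'') \cdot R \cdot x \;\equiv\; \mathrm{row}(T'') \cdot y \cdot R'
\]
(with $y$ omitted in the termination case).

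This local equivalence is then verified by a finite case analysis matching the Hecke insertion rules at $R$. In the productive cases, where $R'$ genuinely differs from $R$, one obtains the equivalence from the standard Knuth relations $xzy \equiv zxy$ and $yxz \equiv yzx$ exactly as in the classical RSK proof: slide $x$ leftward past the entries of $R$ greater than $x$ and peel off the bumped letter $y$ on the left. The additional idempotent cases, where $R$ is left unchanged, are handled by the two specifically $K$-theoretic relations. When $x$ equals the last entry of $R$, the equivalence $R \cdot x \equiv R$ is an immediate application of $x \equiv xx$. When bumping occurs without modifying $R$ because the substitution would create an immediate repeat, so $R$ ends with the pattern $xy$ with $x<y$, the substring $xyx$ appears at the end of $R \cdot x$ and $xyx \equiv yxy$ converts it into the required $y \cdot R$.

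The hard part is the remaining idempotent sub-case, in which a bumped value $x$ is passed into a row $R$ with $x$ larger than every entry of $R$ but whose new box is forbidden by a column constraint coming from the row above. In this configuration $x$ need not appear in $R$ at all, so it cannot be absorbed locally; the absorption must take place one row higher, using the already-modified row above together with $xyx \equiv yxy$ and $x \equiv xx$ in sequence, as in the small computation
\[
2\,1\,3\,1 \;\equiv\; 2\,3\,1\,3 \;\equiv\; 2\,1\,3\,3 \;\equiv\; 2\,1\,3,
\]
which realises the full $\mathrm{row}(T)\cdot x \equiv \mathrm{row}(T)$ for $T=\begin{smallmatrix}1&3\\2\end{smallmatrix}$ and $x=1$. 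Handling this non-local absorption cleanly is the technical heart of the argument; I would address it by strengthening the secondary induction so that one carries the entire current suffix of the reading word as the invariant, rather than attempting to prove the equivalence in a strictly row-by-row fashion.
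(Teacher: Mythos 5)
First, note that the paper does not prove this lemma at all: it is quoted verbatim from \cite[Lemma 5.8]{GMPPRST} and used as a black box, so there is no internal proof to compare against; your proposal is an attempt to reprove the literature result, and it has to be judged on its own completeness.

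Judged that way, there is a genuine gap, and it sits exactly where you place ``the technical heart.'' Your reduction hinges on the claim that after the top row $R$ emits a bumped letter $y$, one may ``apply the secondary inductive hypothesis to $T''$ and $y$,'' i.e.\ treat the rest of the insertion as a standalone Hecke insertion of $y$ into the lower tableau $T''$. This is false in general: the Hecke rules for the lower rows consult the row above (both the entry directly above a candidate box and the shape), so the continuation of $T\xleftarrow{\,\mathrm{H}}x$ inside the lower rows need not coincide with $T''\xleftarrow{\,\mathrm{H}}y$. You flag one manifestation (a new box at the end of a row forbidden by the column above), but the discrepancy also occurs in the replacement step. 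For instance, take $T$ with first row $2\,5$ and second row $3\,6$, and insert $x=2$: the first row is unchanged and bumps $y=5$; inside $T$ the second row stays $3\,6$ (replacing $6$ by $5$ is blocked because the entry $5$ sits directly above) and $6$ is passed down, whereas the standalone insertion of $5$ into $3\,6$ would produce $3\,5$ and bump $6$. Hence the inductive invariant $\mathrm{row}(T'')\cdot R\cdot x\equiv\mathrm{row}(T'')\cdot y\cdot R'$ combined with ``induct on $T''$'' is not a valid scheme, and the blocked cases cannot be dispatched row-locally: the $K$-theoretic absorptions must use letters from rows already processed, as your $2131\equiv 213$ toy computation illustrates. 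Your proposed remedy---strengthening the induction to carry the whole current suffix of the reading word---is precisely the nontrivial content of the lemma, and it is only gestured at, not carried out; the classical-Knuth cases you do verify are the easy part. So the proposal is a reasonable plan with correct treatment of the unobstructed cases, but it does not yet constitute a proof.
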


\begin{prop}\label{Pro4} \cite[Proposition 3.2]{CGP}
Let $w=w_1w_2\cdots w_n$ be a word of positive integers, and $k$ be the maximal element appearing in $w$.
Let $w'=a_1a_2\cdots a_m$ be the word obtained from $w$ by deleting the elements equal to $k$.
Assume that $T$ is the insertion tableau of $w$ and $T'$ is the insertion tableau of $w'$.
Then  $T'$ is obtained from $T$ by deleting the squares occupied with $k$.
\end{prop}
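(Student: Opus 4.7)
The plan is to induct on $n = |w|$. Write $w = u \cdot w_n$, set $T_u = P(u)$, and for any tableau $V$ with entries at most $k$ let $V^\circ$ denote the tableau obtained by erasing every $k$-entry of $V$; the inductive hypothesis reads $T_u^\circ = P(u')$, where $u'$ is $u$ with its $k$'s deleted. A preliminary structural observation is that every $k$-entry of an increasing tableau with entries $\leq k$ must simultaneously be the rightmost box of its row and the bottommost box of its column (otherwise the strict-increase rule would force an entry exceeding $k$), so the $k$-boxes occupy outer corners and $V^\circ$ is a legitimate straight-shape increasing tableau. For the easy case $w_n=k$ the insertion $T_u \xleftarrow{\,\mathrm{H}} k$ halts at row $1$: since $k$ is the maximum the algorithm terminates there and either appends a new $k$-box at the end of row $1$ or leaves $T_u$ unchanged. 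Either way no non-$k$ entry is touched and $T^\circ = T_u^\circ = P(u') = P(w')$.

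For the harder case $w_n < k$ I would establish the sub-lemma that for every increasing tableau $U$ with entries $\leq k$ and every $y<k$,
\[
(U \xleftarrow{\,\mathrm{H}} y)^\circ \;=\; U^\circ \xleftarrow{\,\mathrm{H}} y,
\]
which combined with the inductive hypothesis settles the case $w_n<k$. The sub-lemma is proved by comparing the two insertions row by row. Because the non-$k$ entries of row $i$ of $U$ coincide with row $i$ of $U^\circ$, and because the column condition is automatic at any position where $U$ carries a $k$ directly below (the bumped value $z_i<k$ is automatically less than $k$, while the corresponding box in $U^\circ$ is absent), as long as the input $z_i$ at row $i$ is strictly less than $k$ the replace/add/terminate decisions coincide in $U$ and $U^\circ$, and the same non-$k$ integer gets bumped to row $i+1$.

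The genuinely delicate case is when row $i$ of $U$ ends with $k$ at $(i,p)$ and $z_i$ is at least the largest non-$k$ entry of that row. The insertion into $U$ then either replaces the trailing $k$ by $z_i$ (when $z_i$ strictly exceeds the preceding entry) or leaves the row intact (when $z_i$ equals it), but in either subcase it bumps $k$ down to row $i+1$; meanwhile the insertion into $U^\circ$ terminates at row $i$, either appending $z_i$ or doing nothing. A direct check shows the non-$k$ modification to row $i$ is the same in both cases, and the subsequent propagation of the bumped $k$ through the lower rows of $U$ can only add or omit $k$-boxes, by the same maximal-element argument used in the $w_n=k$ case. This termination-versus-continuation asymmetry between $U^\circ$ and $U$ is the main obstacle of the proof: the sub-lemma holds not because the two algorithms run in lockstep but because the extra work done in $U$ after $U^\circ$ has halted is invisible once all $k$-entries are erased.
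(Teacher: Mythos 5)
The paper itself does not prove this proposition; it just cites \cite[Proposition~3.2]{CGP}, so there is no internal argument to compare yours against line by line. As a standalone proof, your outline is sound: the observation that every $k$-entry of an increasing tableau with entries at most $k$ sits in an outer corner (so $V^\circ$ is a genuine straight-shape increasing tableau), the easy case $w_n=k$ (inserting the maximal letter stops in row $1$ and touches no non-$k$ entry), and the commutation sub-lemma $(U\xleftarrow{\,\mathrm{H}}y)^\circ=U^\circ\xleftarrow{\,\mathrm{H}}y$ for $y<k$, fed into an induction on the length of $w$, do yield the proposition, and your row-by-row comparison correctly isolates the one place where the two insertions diverge.

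One step is misstated, though: in the delicate case, whether the trailing $k$ in cell $(i,p)$ is replaced by $z_i$ is not decided by the preceding entry alone; the entry directly above, in cell $(i-1,p)$, must also be smaller than $z_i$. For example, with $k=9$ and $U$ having rows $(2\,4)$ and $(3\,9)$, inserting $2$ bumps $4$ into the second row, where $4$ strictly exceeds the preceding entry $3$, yet the $9$ is not replaced because the entry above it equals $4$; the $9$ is bumped out and merely appends a $k$-box below. Your conclusion survives because the append test in $U^\circ$ concerns exactly the same cell $(i,p)$ and exactly the same two comparisons (left neighbor $<z_i$, upper neighbor $<z_i$), the shape condition for that append being automatic since the cell above a $k$-box always holds a non-$k$ entry and hence persists in $U^\circ$; so the two tests succeed or fail together, and in either outcome the non-$k$ content of row $i$ agrees, while the bumped $k$ terminates in the next row after at most creating another $k$-box. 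Phrased this way (and noting the analogous point for the non-delicate append case, where a $k$ or a missing box above the append cell blocks both runs simultaneously), your ``direct check'' is correct and the whole argument goes through.
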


\section{Proof of the Main Theorem}\label{S:proof}


Recall that $N(\mathcal{M};n;ne=u,se=v)$ denotes the number of 01-fillings 
of the polyomino $\mathcal{M}$  such that the sum of entries equal to $n$, the longest $ne$-chain has length $u$, and 
the longest $se$-chain has length $v$. The goal of this section is to provide proof of Theorem~\ref{Thm1} which relates the fillings of the stack polyominos $\mathcal{M}$ and $\sigma \mathcal{M}$.

Let $\mathcal{M}$ be a stack polyomino. Since the rows are left-justified, it is determined by its sequence of row lengths  $(r_1, r_2, \ldots, r_k)$ counted from bottom up. For example, the stack polyomino in Figure~\ref{fig:1}b  is represented by $(2,3,5,6,6,4,1)$. If the bottom row of $\mathcal{M}$ is not the longest then it can be moved up and inserted in a new position above the longest row so that the resulting shape is still a stack polyomino. Such a position always exists and if there are multiple positions where the bottom row can be reinserted we think of inserting it in the highest position possible. This way, by applying only such ``move the bottom row up'' moves the rows of $\mathcal{M}$ can be reordered so that the resulting polyomino is a Ferrers shape $\mathcal{F}$ in French notation (the row lengths weakly increase top to bottom). For example, $\mathcal{M}=(3,4,5,4,2)\rightarrow(4,5,4,3,2)\rightarrow(5,4,4,3,2)=\mathcal{F}$. The Ferrers shape $\mathcal{F}$ depends only on the set of row lengths of $\mathcal{M}$ and not the actual sequence, so any stack polyomino $\sigma \mathcal{M}$ can be transformed to the same Ferrers shape $\mathcal{F}$ by applying ``move the bottom row up'' moves. Theoerefore, to prove Theorem~\ref{Thm1}, it suffices to prove only 

\begin{prop} \label{prop:moveup}
Let $\mathcal{M}$ be a stack polyomino which is not a Ferrers shape and let $\mathcal{M}'$ be obtained by moving the bottom row of $\mathcal{M}$ up as much as possible so that 
the resulting polyomino is also a stack polyomino. Then \[N(\mathcal{M};n;ne=u,se=v) = N(\mathcal{M}';n;ne=u,se=v).\] 
\end{prop}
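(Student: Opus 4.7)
The plan is to construct an explicit, invertible map $F \mapsto F'$ between $01$-fillings of $\mathcal{M}$ and those of $\mathcal{M}'$ that preserves the number of $1$'s, the longest $\mathrm{ne}$-chain length, and the longest $\mathrm{se}$-chain length. Because $\mathcal{M}$ and $\mathcal{M}'$ agree except for the position of the originally-bottom row, the bijection should leave the $1$'s outside this row essentially fixed (up to the induced shift in row indices) and redistribute the $1$'s of the moved row among its columns in the new position.

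The main tool is Hecke insertion. I would encode each filling $F$ as a word $w(F)$ by scanning the columns of $\mathcal{M}$ from left to right and listing, for each nonempty column, the row index of its unique $1$. By Theorem~\ref{Thm2}, $\mathrm{lis}(w(F))$ and $\mathrm{lds}(w(F))$ equal the number of columns and rows of the Hecke insertion tableau $P(w(F))$. The first key claim is that for a stack polyomino these two quantities coincide with the longest $\mathrm{ne}$- and $\mathrm{se}$-chain lengths of $F$: unimodality of the row lengths together with column-convexity force the rectangle-containment condition defining a chain to be automatically satisfied by any strictly monotone subsequence of $w(F)$, provided the columns are read left-to-right.

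The bijection $\phi$ is then built by transferring the information recorded in the Hecke insertion tableau across the row move through a $K$-jeu de taquin slide. Equivalently, one can describe $\phi$ directly on the Hecke growth diagram of $F$: the filling lives in the bounding rectangle of $\mathcal{M}$; moving the bottom row up reshapes the boundary of the diagram, and applying the backward local rules of Patrias--Pylyavskyy to the new boundary produces a filling $F'$ of $\mathcal{M}'$. Since $K$-jeu de taquin corresponds to $K$-Knuth equivalence of reading words (Theorem~\ref{Thm3}), and $K$-Knuth equivalent words share the same $\mathrm{lis}$ and $\mathrm{lds}$ (Proposition~\ref{Pro2}), the $\mathrm{ne}$- and $\mathrm{se}$-statistics are preserved under $\phi$.

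The main obstacle will be checking that the row move translates into a well-defined operation on growth diagrams (or tableaux) whose output really is a filling of $\mathcal{M}'$, rather than some arbitrary increasing tableau. One must track how the set of columns meeting each row changes under the move and verify compatibility with the left-to-right reading order used to define $w$. I expect to use Lemma~\ref{Pro1} to restrict the analysis to the interval of row indices affected by the move, and Proposition~\ref{Pro4} to peel off the contribution of the moved row's maximal entries so that the chain statistics can be compared incrementally. Invertibility of $\phi$ then follows from the reversibility of Hecke insertion and of the growth-diagram local rules.
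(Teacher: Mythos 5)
There is a genuine gap, and it occurs at the very first step. You encode the whole filling $F$ of the stack polyomino as a single word $w(F)$ (row indices of the $1$'s read column by column) and claim that, by unimodality and column-convexity, every strictly monotone subsequence of $w(F)$ automatically satisfies the rectangle-containment condition, so that $\mathrm{lis}(w(F))$ and $\mathrm{lds}(w(F))$ equal $\mathrm{ne}(F)$ and $\mathrm{se}(F)$. This is false for stack polyominoes. Take the stack polyomino with row lengths $(1,2)$ from bottom to top, with a $1$ in the bottom row, column $1$, and a $1$ in the top row, column $2$. The word is $12$, so $\mathrm{lis}=2$, but the bounding rectangle of these two entries contains the box in row $1$, column $2$, which lies outside the polyomino, so the longest $ne$-chain has length $1$. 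In a stack polyomino only one of the two corner boxes of the bounding rectangle is guaranteed to be present (the one in the longer row); the other requires the \emph{lower-left} entry's row to reach the column of the other entry, which your filling need not satisfy. Since chains and monotone subsequences of a column-reading word coincide only inside a rectangle, a single global word cannot capture $\mathrm{ne}$ and $\mathrm{se}$, and everything built on this identification (the transfer of statistics via Theorem~\ref{Thm2}, Proposition~\ref{Pro2}) collapses.

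This failure is precisely why the paper's argument is organized around maximal rectangles: the chain statistics of a filling are the maxima, over all maximal rectangles, of the statistics of the restricted fillings, and the word encoding is applied only inside a rectangle. The bijection is then designed to alter the filling \emph{only} inside the maximal rectangle $R$ containing the bottom row (sending its word $w$ to $w'$ via $P\mapsto P'$ by a $K$-jeu de taquin slide with the \emph{same} recording tableau $Q$, which also fixes the number of $1$'s), and the real work is showing the statistics are unchanged in every \emph{other} maximal rectangle: for rectangles narrower than $R$ this uses the locality of the Hecke growth rules and equality of recording tableaux, and for rectangles wider than $R$ it uses $K$-Knuth equivalence together with Lemma~\ref{Pro1}, Proposition~\ref{Pro4}, Theorem~\ref{Thm3} and Proposition~\ref{Pro2}. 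Your proposal never addresses these cross-rectangle comparisons, and the point you flag as "the main obstacle" (that the backward local rules applied to a reshaped boundary actually return a filling of $\mathcal{M}'$ with at most one $1$ per column) is left unresolved; in the paper this is automatic because the reconstruction is by reverse Hecke insertion from $(P',Q)$ inside the single rectangle $R'$, with all other entries literally copied.
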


\begin{proof}
Let $\mathcal{F}(\mathcal{M})$ denote the set of fillings of a polyomino $\mathcal{M}$. We prove this claim by constructing a bijection $f : \mathcal{F}(\mathcal{M}) \rightarrow \mathcal{F}(\mathcal{M}')$ between the fillings of $\mathcal{M}$ and $\mathcal{M}'$ which preserves the lengths of the longest \emph{ne}- and \emph{se}-chains. For $M \in \mathcal{F}(\mathcal{M})$, the filling $f(M)$ of $\mathcal{M}'$ is defined by modifying the filling inside one maximal rectangle as follows.

Let $r$ be the bottom row of $\mathcal{M}$ and let $R$ be the largest rectangle contained in $\mathcal{M}$ that contains row $r$. Let $R'$ be the maximal rectangle in $\mathcal{M}'$ of the same size as $R$. We first define a bijection $\varphi_{R ,R ^\prime}: \mathcal{F}(R)\rightarrow\mathcal{F}(R^\prime)$ between the fillings of $R$ and $R'$ as follows. Let $T\in \mathcal{F}(R)$. In general, $T$ may contain some empty rows and columns. If the bottom row of $T$ is empty then $\varphi_{R ,R'}(T)$ is the filling $T'$ whose top row is empty while the $k$-th row, $k \geq 1$ is the same as the $(k+1)$-st row of $T$. Otherwise, if $1 < a_1 < a_2 < \ldots < a_i$ are the empty rows of $T$ then the rows $a_1-1, a_2-1, \ldots, a_i-1$ in $T' = \varphi_{R,R^\prime}(T)$ are empty. If $b_1, b_2, \ldots, b_j$ are the empty columns of $T$ then $b_1, b_2, \ldots, b_j$ are also empty columns in $T'$. Let $w$ be the word that corresponds to the filling $T$ when the empty rows and columns are ignored. For example, the word that corresponds to the rectangle $R$ in Figure~\ref{fig:11} is  $w = 236126415$.

\begin{figure}[h]
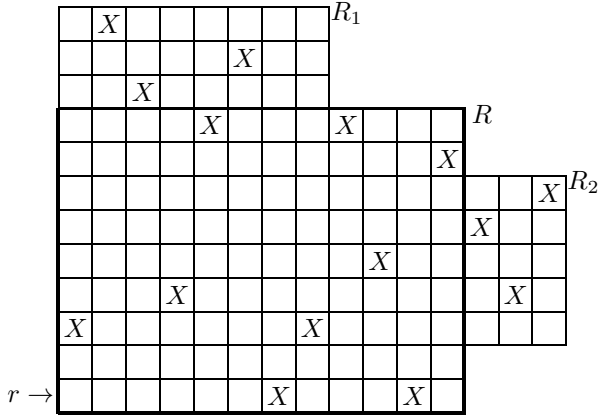

\begin{center}
$$
\Einheit.15cm
\PfadDicke{.3pt}
\Pfad(45,6),222222222222222\endPfad
\Pfad(42,6),222222222222222\endPfad
\Pfad(39,6),222222222222222\endPfad
\Pfad(33,0),222222222222222222222222222\endPfad
\Pfad(30,0),222222222222222222222222222\endPfad
\Pfad(27,0),222222222222222222222222222\endPfad
\Pfad(24,0),222222222222222222222222222222222222\endPfad
\Pfad(21,0),222222222222222222222222222222222222\endPfad
\Pfad(18,0),222222222222222222222222222222222222\endPfad
\Pfad(15,0),222222222222222222222222222222222222\endPfad
\Pfad(12,0),222222222222222222222222222222222222\endPfad
\Pfad(9,0),222222222222222222222222222222222222\endPfad
\Pfad(6,0),222222222222222222222222222222222222\endPfad
\Pfad(3,0),222222222222222222222222222222222222\endPfad
\Pfad(0,0),222222222222222222222222222222222222\endPfad
\Pfad(0,36),111111111111111111111111\endPfad
\Pfad(0,33),111111111111111111111111\endPfad
\Pfad(0,30),111111111111111111111111\endPfad
\Pfad(0,24),111111111111111111111111111111111111\endPfad
\Pfad(0,21),111111111111111111111111111111111111111111111\endPfad
\Pfad(0,18),111111111111111111111111111111111111111111111\endPfad
\Pfad(0,15),111111111111111111111111111111111111111111111\endPfad
\Pfad(0,12),111111111111111111111111111111111111111111111\endPfad
\Pfad(0,9),111111111111111111111111111111111111111111111\endPfad
\Pfad(0,6),111111111111111111111111111111111111111111111\endPfad
\Pfad(0,3),111111111111111111111111111111111111\endPfad
\PfadDicke{1pt}
\Pfad(0,27),111111111111111111111111111111111111\endPfad
\Pfad(0,0),111111111111111111111111111111111111\endPfad
\Pfad(0,0),222222222222222222222222222\endPfad
\Pfad(36,0),222222222222222222222222222\endPfad
\Label\ro{\text {\small$X$}}(1,7)
\Label\ro{\text {\small$X$}}(4,34)
\Label\ro{\text {\small$X$}}(7,28)
\Label\ro{\text {\small$X$}}(10,10)
\Label\ro{\text {\small$X$}}(13,25)
\Label\ro{\text {\small$X$}}(16,31)
\Label\ro{\text {\small$X$}}(19,1)
\Label\ro{\text {\small$X$}}(22,7)
\Label\ro{\text {\small$X$}}(25,25)
\Label\ro{\text {\small$X$}}(28,13)
\Label\ro{\text {\small$X$}}(31,1)
\Label\ro{\text {\small$X$}}(34,22)
\Label\ro{\text {\small$X$}}(37,16)
\Label\ro{\text {\small$X$}}(40,10)
\Label\ro{\text {\small$X$}}(43,19)
\Label\ro{\text {\small$R_1$}}(25,35)
\Label\ro{\text {\small$R$}}(37,26)
\Label\ro{\text {\small$R_2$}}(46,20)
\Label\ro{\text {\small$r\rightarrow$}}(-3,1)
\hskip5cm
$$
\end{center}
\caption{A filling of the stack polyomino $\mathcal{M}$.}
\label{fig:11}
\end{figure}

Let $(P, Q)$ be the Hecke insertion tableau and the Hecke recording tableau for $w$, respectively. Let $P'$ be the tableau obtained in the following way:

Step 1. Let $C$ be the top left box of $P$. Replace the entry $1$ appearing in $C$ with a $\bullet$. 

Step 2. Swap adjacent $\bullet$'s and $2$'s, then swap adjacent $\bullet$'s and $3$'s, etc, until the $\bullet$'s 
have been swapped with all the entries of $P$. The increasing tableau obtained this way is $jdt_{\{1\}}(P/1)$.

Step 3. Decrease all entries by 1 and then replace the $\bullet$'s with the maximum value of $P$. The resulting increasing tableau is $P^\prime$.

See Figure~\ref{fig:13} for an illustration of obtaining $P'$  for the Hecke insertion tableau $P$ of the word $w = 236126415$.

\begin{figure}[b!]
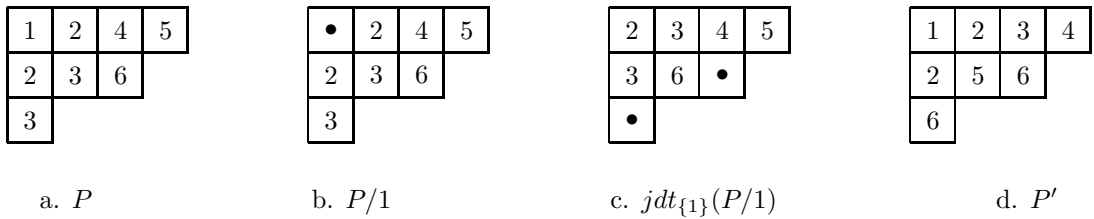

$$
\Einheit.2cm
\PfadDicke{.5pt}
\Pfad(12,6),222\endPfad
\Pfad(9,3),222222\endPfad
\Pfad(6,3),222222\endPfad
\Pfad(3,0),222222222\endPfad
\Pfad(0,0),222222222\endPfad
\Pfad(0,9),111111111111\endPfad
\Pfad(0,6),111111111111\endPfad
\Pfad(0,3),111111111\endPfad
\Pfad(0,0),111\endPfad
\Label\ro{\text {\small$1$}}(1,7)
\Label\ro{\text {\small$2$}}(1,4)
\Label\ro{\text {\small$3$}}(1,1)
\Label\ro{\text {\small$2$}}(4,7)
\Label\ro{\text {\small$3$}}(4,4)
\Label\ro{\text {\small$4$}}(7,7)
\Label\ro{\text {\small$6$}}(7,4)
\Label\ro{\text {\small$5$}}(10,7)
\hbox{\hskip4cm}
\Einheit.2cm
\PfadDicke{.5pt}
\Pfad(12,6),222\endPfad
\Pfad(9,3),222222\endPfad
\Pfad(6,3),222222\endPfad
\Pfad(3,0),222222222\endPfad
\Pfad(0,0),222222222\endPfad
\Pfad(0,9),111111111111\endPfad
\Pfad(0,6),111111111111\endPfad
\Pfad(0,3),111111111\endPfad
\Pfad(0,0),111\endPfad
\Label\ro{\text {\small$\bullet$}}(1,7)
\Label\ro{\text {\small$2$}}(1,4)
\Label\ro{\text {\small$3$}}(1,1)
\Label\ro{\text {\small$2$}}(4,7)
\Label\ro{\text {\small$3$}}(4,4)
\Label\ro{\text {\small$4$}}(7,7)
\Label\ro{\text {\small$6$}}(7,4)
\Label\ro{\text {\small$5$}}(10,7)
\hbox{\hskip4cm}
\Einheit.2cm
\PfadDicke{.5pt}
\Pfad(12,6),222\endPfad
\Pfad(9,3),222222\endPfad
\Pfad(6,3),222222\endPfad
\Pfad(3,0),222222222\endPfad
\Pfad(0,0),222222222\endPfad
\Pfad(0,9),111111111111\endPfad
\Pfad(0,6),111111111111\endPfad
\Pfad(0,3),111111111\endPfad
\Pfad(0,0),111\endPfad
\Label\ro{\text {\small$2$}}(1,7)
\Label\ro{\text {\small$3$}}(1,4)
\Label\ro{\text {\small$\bullet$}}(1,1)
\Label\ro{\text {\small$3$}}(4,7)
\Label\ro{\text {\small$6$}}(4,4)
\Label\ro{\text {\small$4$}}(7,7)
\Label\ro{\text {\small$\bullet$}}(7,4)
\Label\ro{\text {\small$5$}}(10,7)
\hbox{\hskip4cm}
\Einheit.2cm
\PfadDicke{.5pt}
\Pfad(12,6),222\endPfad
\Pfad(9,3),222222\endPfad
\Pfad(6,3),222222\endPfad
\Pfad(3,0),222222222\endPfad
\Pfad(0,0),222222222\endPfad
\Pfad(0,9),111111111111\endPfad
\Pfad(0,6),111111111111\endPfad
\Pfad(0,3),111111111\endPfad
\Pfad(0,0),111\endPfad
\Label\ro{\text {\small$1$}}(1,7)
\Label\ro{\text {\small$2$}}(1,4)
\Label\ro{\text {\small$6$}}(1,1)
\Label\ro{\text {\small$2$}}(4,7)
\Label\ro{\text {\small$5$}}(4,4)
\Label\ro{\text {\small$3$}}(7,7)
\Label\ro{\text {\small$6$}}(7,4)
\Label\ro{\text {\small$4$}}(10,7)
\hskip3cm
$$
\centerline{\small 
\hskip2cm
a. $P$
\hskip2.8cm
b. $P/1$
\hskip2.8cm
c. $jdt_{\{1\}}(P/1)$
\hskip2.8cm
d. $P^\prime$
\hskip2.5cm}
\caption{An example of constructing  $P'$ for the tableau $P$.}
\label{fig:13}
\end{figure}

Let $w'$ be the word that corresponds to  $(P^\prime, Q)$, obtained  by reverse Hecke insertion. Then $T^\prime$ is the filling of $R'$ which, when the empty rows $a_1-1, a_2-1, \ldots, a_i-1$ and the empty columns $b_1, b_2, \ldots, b_j$ are deleted, is the matrix representation of $w^\prime$. Finally, we set  $\varphi_{R,R^\prime}(T)=T^\prime$ for $T\in \mathcal{F}(R)$.

The map $\varphi_{R,R^\prime}$ is clearly invertible since the tableau $P$ can be constructed from $P'$ by doing the following:

Step 1. Replace the maximal entries $m$ appearing in $P^\prime$ with $\bullet$.

Step 2. Swap adjacent $\bullet$'s and $m-1$'s, then swap $\bullet$'s and $m-2$'s, etc, until the $\bullet$'s  have been swapped with all the entries of $P^\prime$. 

Step 3. Increase all entries by 1 and then replace the $\bullet$ with $1$. The resulting increasing tableau is $P$.

We now define $f: \mathcal{F}(\mathcal{M})\longrightarrow\mathcal{F}(\mathcal{M}')$ as follows. Let $T \in \mathcal{F}(\mathcal{M})$. Then $f(T)$ is the filling of $\mathcal{M}'$ in which all the entries outside of the rectangle $R'$ in $\mathcal{M}'$ are the same as in the filling of $\mathcal{M}$, while the filling of the rectangle $R'$ is $\varphi_{R, R'} (T |_{R})$ obtained by applying $\varphi_{R, R'}$ to the filling $T$ restricted on $R$. Figure~\ref{fig:12} shows the result of applying $f$ to  the filling of $\mathcal{M}$ in Figure~\ref{fig:11}.

\begin{figure}[h]
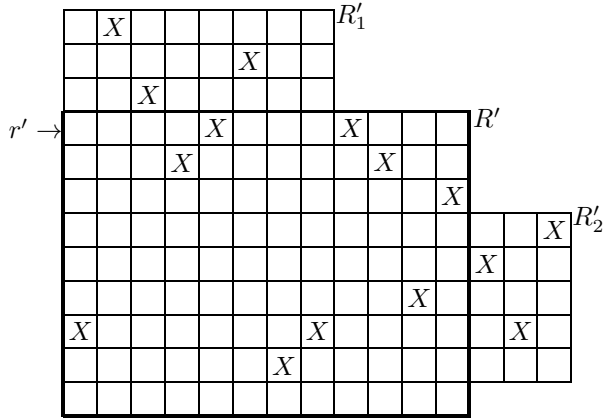

\begin{center}
$$
\Einheit.15cm
\PfadDicke{.3pt}
\Pfad(45,3),222222222222222\endPfad
\Pfad(42,3),222222222222222\endPfad
\Pfad(39,3),222222222222222\endPfad
\Pfad(33,0),222222222222222222222222222\endPfad
\Pfad(30,0),222222222222222222222222222\endPfad
\Pfad(27,0),222222222222222222222222222\endPfad
\Pfad(24,0),222222222222222222222222222222222222\endPfad
\Pfad(21,0),222222222222222222222222222222222222\endPfad
\Pfad(18,0),222222222222222222222222222222222222\endPfad
\Pfad(15,0),222222222222222222222222222222222222\endPfad
\Pfad(12,0),222222222222222222222222222222222222\endPfad
\Pfad(9,0),222222222222222222222222222222222222\endPfad
\Pfad(6,0),222222222222222222222222222222222222\endPfad
\Pfad(3,0),222222222222222222222222222222222222\endPfad
\Pfad(0,0),222222222222222222222222222222222222\endPfad
\Pfad(0,36),111111111111111111111111\endPfad
\Pfad(0,33),111111111111111111111111\endPfad
\Pfad(0,30),111111111111111111111111\endPfad
\Pfad(0,24),111111111111111111111111111111111111\endPfad
\Pfad(0,21),111111111111111111111111111111111111\endPfad
\Pfad(0,18),111111111111111111111111111111111111111111111\endPfad
\Pfad(0,15),111111111111111111111111111111111111111111111\endPfad
\Pfad(0,12),111111111111111111111111111111111111111111111\endPfad
\Pfad(0,9),111111111111111111111111111111111111111111111\endPfad
\Pfad(0,6),111111111111111111111111111111111111111111111\endPfad
\Pfad(0,3),111111111111111111111111111111111111111111111\endPfad
\PfadDicke{1pt}
\Pfad(0,27),111111111111111111111111111111111111\endPfad
\Pfad(0,0),111111111111111111111111111111111111\endPfad
\Pfad(0,0),222222222222222222222222222\endPfad
\Pfad(36,0),222222222222222222222222222\endPfad
\Label\ro{\text {\small$X$}}(1,7)
\Label\ro{\text {\small$X$}}(4,34)
\Label\ro{\text {\small$X$}}(7,28)
\Label\ro{\text {\small$X$}}(10,22)
\Label\ro{\text {\small$X$}}(13,25)
\Label\ro{\text {\small$X$}}(16,31)
\Label\ro{\text {\small$X$}}(19,4)
\Label\ro{\text {\small$X$}}(22,7)
\Label\ro{\text {\small$X$}}(25,25)
\Label\ro{\text {\small$X$}}(28,22)
\Label\ro{\text {\small$X$}}(31,10)
\Label\ro{\text {\small$X$}}(34,19)
\Label\ro{\text {\small$X$}}(37,13)
\Label\ro{\text {\small$X$}}(40,7)
\Label\ro{\text {\small$X$}}(43,16)
\Label\ro{\text {\small$R_1^\prime$}}(25,35)
\Label\ro{\text {\small$R^\prime$}}(37,26)
\Label\ro{\text {\small$R_2^\prime$}}(46,17)
\Label\ro{\text {\small$r^\prime\rightarrow$}}(-3,25)
\hskip5cm
$$
\end{center}
\caption{A filling of the stack polyomino $\mathcal{M}'$.}
\label{fig:12}
\end{figure}

Since the map $\varphi_{R, R'}$ is a  bijection, the map $f$ is as well. The map $\varphi_{R, R'}$ preserves the number of ones, because the words that correspond to the fillings in $R$ and $R'$ under this map have the same recording tableau $Q$, whose number of entries gives the length of the words. Therefore, the map $f$ also preserves the number of ones.

We claim that $f$ preserves the lengths of the longest \emph{ne}- and \emph{se}-chains. To prove this it suffices to show that these statistics are the same when the fillings $M$ and $f(M)$ are restricted to the same maximal rectangle. Recall that the fillings of the rectangles represent words as described before and \emph{ne}-chains (resp. \emph{se}-chains)  correspond to increasing (resp. decreasing) subsequences in those words.

First let $T$ (resp. $T'$) be the filling $M$ (resp. $f(M)$) restricted to the rectangle $R$ (resp. $R$') in $\mathcal{M}$ (resp. $\mathcal{M}'$). By Theorem~\ref{Thm2}, $\mathrm{ne}(T)$ and $\mathrm{se}(T)$ are equal to the number of columns and rows of the Hecke insertion tableau $P$, respectively. Similarly, $\mathrm{ne}(T')$ and $\mathrm{se}(T')$ are equal to the number of columns and rows of the tableau $P'$. Since $P$ and $P'$ have the same shape, we have $\mathrm{ne}(T) = \mathrm{ne}(T')$ and $\mathrm{se}(T) = \mathrm{se}(T')$.

\begin{figure}[h]
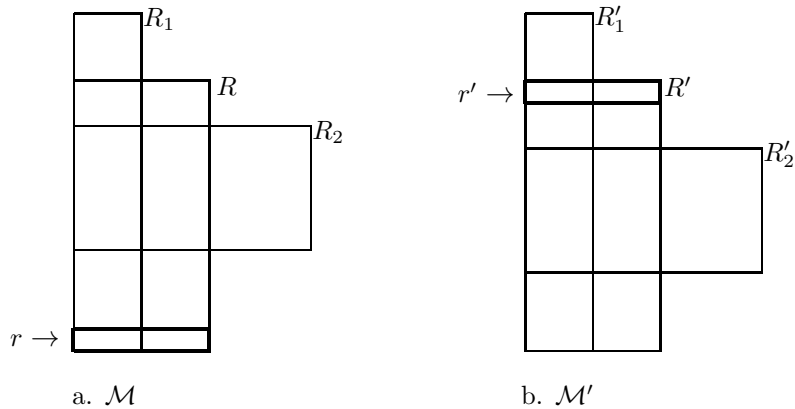

$$
\Einheit.15cm
\PfadDicke{.5pt}
\Pfad(21,9),22222222222\endPfad
\Pfad(12,2),2222222222222222222222\endPfad
\Pfad(6,0),222222222222222222222222222222\endPfad
\Pfad(0,2),2222222222222222222222222222\endPfad
\Pfad(0,30),111111\endPfad
\Pfad(0,24),111111111111\endPfad
\Pfad(0,20),111111111111111111111\endPfad
\Pfad(0,9),111111111111111111111\endPfad
\PfadDicke{1pt}
\Pfad(0,2),111111111111\endPfad
\Pfad(0,0),111111111111\endPfad
\Pfad(12,0),22\endPfad
\Pfad(0,0),22\endPfad
\Label\ro{\text {\small$r$}\rightarrow}(-4,0.5)
\Label\ro{\text {\small$R_1$}}(7,29)
\Label\ro{\text {\small$R$}}(13,23)
\Label\ro{\text {\small$R_2$}}(22,19)
\hbox{\hskip6cm}
\PfadDicke{.5pt}
\Pfad(21,7),22222222222\endPfad
\Pfad(12,0),2222222222222222222222\endPfad
\Pfad(6,0),222222222222222222222222222222\endPfad
\Pfad(0,0),222222222222222222222222222222\endPfad
\Pfad(0,30),111111\endPfad
\Pfad(0,18),111111111111111111111\endPfad
\Pfad(0,7),111111111111111111111\endPfad
\Pfad(0,0),111111111111\endPfad
\PfadDicke{1pt}
\Pfad(0,24),111111111111\endPfad
\Pfad(0,22),111111111111\endPfad
\Pfad(12,22),22\endPfad
\Pfad(0,22),22\endPfad
\Label\ro{\text {\small$r^\prime$}\rightarrow}(-4,22.5)
\Label\ro{\text {\small$R_1^\prime$}}(7,29)
\Label\ro{\text {\small$R^\prime$}}(13,23)
\Label\ro{\text {\small$R_2^\prime$}}(22,17)
\hbox{\hskip3cm}
$$
\centerline{\small 
\hskip2cm
a. $\mathcal{M}$
\hskip5cm
b. $\mathcal{M}'$
\hskip4cm
}
\caption{Moving up the bottom row of a stack polyomino.}
\label{fig:10}
\end{figure}

The rest of the maximal rectangles in $\mathcal{M}$ and $\mathcal{M}'$ fall into two classes: those that are narrower and those that are wider than the rectangle $R$. Let $R_{1}$ be a maximal rectangle in $\mathcal{M}$ that is narrower than $R$  and let $R_{1}'$ be the corresponding rectangle of the same size in $\mathcal{M}'$, as shown in Figure~\ref{fig:10}. We can divide the filling $T_1$ of $R_{1}$ into two parts, $w_1$ (inside $R$) and $\alpha$ (outside of $R$), as in the left filling in Figure~\ref{fig:28}. Similarly, the filling $T_1^\prime$ of $R_1^\prime$, also consists of two parts, $w_1'$ (inside $R'$) and $\alpha$ (outside of $R'$), as in the right filling in Figure~\ref{fig:28}.

\begin{figure}[h]
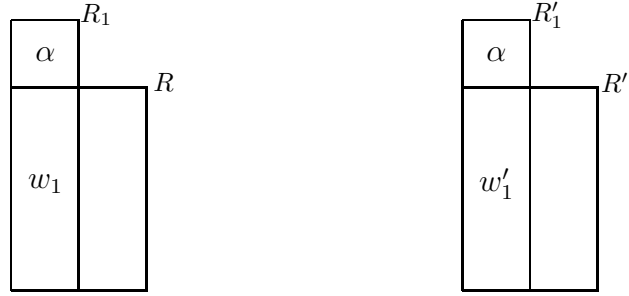

$$
\Einheit.15cm
\PfadDicke{.5pt}
\Pfad(12,0),222222222222222222\endPfad
\Pfad(6,0),222222222222222222222222\endPfad
\Pfad(0,0),222222222222222222222222\endPfad
\Pfad(0,24),111111\endPfad
\Pfad(0,18),111111111111\endPfad
\Pfad(0,0),111111111111\endPfad
\Label\ro{\text {\small$R_1$}}(7,24)
\Label\ro{\text {\small$R$}}(13,18)
\Label\ro{\text {\large$\alpha$}}(2.5,20.5)
\Label\ro{\text {\large$w_1$}}(2.5,9)
\hbox{\hskip6cm}
\PfadDicke{.5pt}
\Pfad(12,0),222222222222222222\endPfad
\Pfad(6,0),222222222222222222222222\endPfad
\Pfad(0,0),222222222222222222222222\endPfad
\Pfad(0,24),111111\endPfad
\Pfad(0,18),111111111111\endPfad
\Pfad(0,0),111111111111\endPfad
\Label\ro{\text {\small$R_1^\prime$}}(7,24)
\Label\ro{\text {\small$R^\prime$}}(13,18)
\Label\ro{\text {\large$\alpha$}}(2.5,20.5)
\Label\ro{\text {\large$w_1^\prime$}}(2.5,9)
\hbox{\hskip3cm}
$$
\caption{Decompositions of the fillings in $R_1$ and $R_1^\prime$.}
\label{fig:28}
\end{figure}

Recall that the growth diagrams of the fillings $T$ and $T'$ are constructed from the bottom left corner and the recording tableau is encoded by the sequence of partitions and edge labels along the top border. Therefore, since the words encoded by $T$ and $T^\prime$ have the same Hecke recording tableau $Q$ by construction,  the recording tableaux $Q(w_1)$ and $Q(w_1^\prime)$ are also the same and are obtained by deleting the values that correspond to the columns outside of the rectangle $R \cap R_{1}$.  Moreover, since the rules for building the growth diagrams are local and the top filling $\alpha$ is the same in both $R_{1}$ and $R_{1}'$, it follows that the words in both $R_{1}$ and $R_{1}'$ have the same recording tableaux as well. Since the Hecke insertion and recording tableaux have the same shape, by Theorem~\ref{Thm2}, we have $ne(T_1)=ne(T_1^\prime)$ and $se(T_1)=se(T_1^\prime)$.

Consider now a maximal rectangle $R_{2}$ in $\mathcal{M}$ which is wider than $R$ and let $R_{2}'$ be the maximal rectangle in $\mathcal{M}'$ of the same size, as in Figure~\ref{fig:15}. Let $T$ and $T_{2}$ be the fillings of $R$ and $R_{2}$ within the filling $M$ of $\mathcal{M}$ and let $T'$ and $T_{2}'$ be the fillings of $R'$ and $R_{2}'$ within the filling $M' = f(M)$ of $\mathcal{M}'$.  These fillings consist of several parts, as represented in Figure~\ref{fig:15}  (some of these parts may be empty). Suppose the filling of $T$, read from bottom up, is the filling of the row $r$, the filling $\gamma$ between row $r$ and $R_{2}$, $w_2$ inside $R \cap R_{2}$, and $\delta$ above $R_{2}$, and let $\beta$ be the filling inside $R_{2}$ but outside of $R$. Similarly, suppose the filling $T^\prime$ of $R'$ consists of $\gamma^\prime$ in the bottom, followed by $w_2^\prime$ inside $R' \cap R_{2}'$, followed by $\delta^\prime$, followed by the filling $r^\prime$ of the row that was moved. The filling  inside $R_{2}'$ but outside of $R'$ is the same $\beta$ by construction.

\begin{figure}[h]
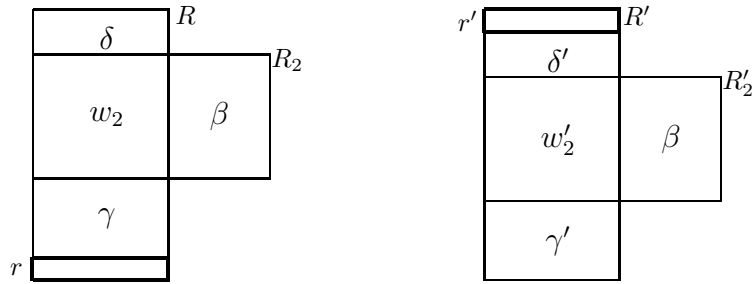

$$
\Einheit.15cm
\PfadDicke{.5pt}
\Pfad(21,9),22222222222\endPfad
\Pfad(12,0),222222222222222222222222\endPfad
\Pfad(0,0),222222222222222222222222\endPfad
\Pfad(0,24),111111111111\endPfad
\Pfad(0,20),111111111111111111111\endPfad
\Pfad(0,9),111111111111111111111\endPfad
\PfadDicke{1pt}
\Pfad(0,2),111111111111\endPfad
\Pfad(0,0),111111111111\endPfad
\Pfad(12,0),22\endPfad
\Pfad(0,0),22\endPfad
\Label\ro{\text {\small$r$}}(-2,0.5)
\Label\ro{\text {\large$\gamma$}}(6,5)
\Label\ro{\text {\large$w_2$}}(6,14)
\Label\ro{\text {\large$\delta$}}(6,21)
\Label\ro{\text {\large$\beta$}}(16,14)
\Label\ro{\text {\small$R$}}(13,23)
\Label\ro{\text {\small$R_{2}$}}(22,19)
\hbox{\hskip6cm}
\PfadDicke{.5pt}
\Pfad(21,7),22222222222\endPfad
\Pfad(12,0),2222222222222222222222\endPfad
\Pfad(0,0),2222222222222222222222\endPfad
\Pfad(0,18),111111111111111111111\endPfad
\Pfad(0,7),111111111111111111111\endPfad
\Pfad(0,0),111111111111\endPfad
\PfadDicke{1pt}
\Pfad(0,24),111111111111\endPfad
\Pfad(0,22),111111111111\endPfad
\Pfad(12,22),22\endPfad
\Pfad(0,22),22\endPfad
\Label\ro{\text {\small$r^\prime$}}(-2,22.5)
\Label\ro{\text {\large$\gamma^\prime$}}(6,3)
\Label\ro{\text {\large$w_2^\prime$}}(6,12)
\Label\ro{\text {\large$\delta^\prime$}}(6,19)
\Label\ro{\text {\large$\beta$}}(16,12)
\Label\ro{\text {\small$R^\prime$}}(13,23)
\Label\ro{\text {\small$R_2^\prime$}}(22,17)
\hbox{\hskip3cm}
$$
\caption{Decompositions of the fillings in $R$, $R_{2}$, $R^\prime$, and $R_2^\prime$.}
\label{fig:15}
\end{figure}

If the row $r$ is empty, then so is $r'$ and we have $\gamma' = \gamma$, $w'_{2} = w_{2}$, $\delta' = \delta$ by construction. So, in this case, $\mathrm{ne}(T_{2}') = \mathrm{ne}(T_{2})$ and $\mathrm{se}(T_{2}') = \mathrm{se}(T_{2})$. 

Now, consider the case when the row $r$ is not empty. For a word $a=a_1a_2\cdots a_r$, let $x_1<x_2<\cdots<x_k$ be the ordered list of letters appearing in $a$. The {\it standardization} of $a$, denoted by $I(a)$, is the word obtained by replacing $x_i$ with $i$ in $a$.  By construction, if $(P, Q)$ is the pair of tableaux that corresponds to $I(w)$ then $(P', Q)$ is the pair of tableaux that correspond to $I(w')$, where $P'$ is constructed from $P$ as explained in the definition of the map $\varphi_{R,R'}$. By Lemma~\ref{Pro3}, we have $I(w) \equiv \mathrm{row}(P)$. Deleting row $r$ corresponds to deleting the 1's in the word $w$, so by Lemma~\ref{Pro1},  we have 
\[ I(w) \backslash 1 \equiv \mathrm{row}(P)\backslash 1 = \mathrm{row}(P/1),\] where $w \backslash 1$ means delete $1$'s from the word $w$ and $P/1$ is the increasing filling of skew shape obtained by deleting the top left corner from $P$. The construction of the tableau $P'$ and Theorem~\ref{Thm3} imply that 
\[I(\mathrm{row}(P/1))\equiv \mathrm{row}(P^\prime/m),\] where $m$ is the maximal number appearing in $P'$. Since $P'$ is the insertion tableau of $I(w')$, by Proposition~\ref{Pro4}, we have that the Hecke insertion tableau of $I(w') \backslash m$ is $P^\prime/m$, which implies 
\[ \mathrm{row}(P^\prime/m)\equiv I(w') \backslash m. \]

Therefore $I(\gamma+w_2+\delta)\equiv I(\gamma^\prime+w_2^\prime+\delta^\prime)$.
Since $\gamma$ (resp. $\delta$) and $\gamma'$ (resp. $\delta'$) have the same number of nonempty rows, by using Lemma~\ref{Pro1} again, we have $I(w_2)\equiv I(w_2^\prime)$. Since the $K$-Knuth relations consist of local transformations, this implies $I(w_2\beta) \equiv I(w_2^\prime\beta)$.  By Proposition~\ref{Pro2}, we have $\mathrm{ne}(T_2)=\mathrm{ne}(T_2^\prime)$ and $\mathrm{se}(T_2)=\mathrm{se}(T_2^\prime)$.

\end{proof}

\section{Special case: Ferrers shapes}\label{S:FS}


In this section, we explain how Theorem~\ref{Thm1} gives a different proof of the main result in~\cite{CGP}.

\textbf{1.} The objects of consideration in \cite{CGP} are linked partitions of $[n]=\{1, 2, \ldots, n\}$.
A {\it linked partition} of $[n]$ is a collection of nonempty subsets $B_1$, $B_2$, $\ldots$, $B_k$ of $[n]$,
called $blocks$, such that $\cup_{i=1}^{k} B_i = [n]$ and any two distinct blocks
are nearly disjoint. Two distinct blocks $B_i$ and $B_j$ are said to be {\it nearly disjoint} if 
for any $t\in B_i\cap B_j$, one of the following conditions holds:
\begin{itemize}
\item[(1)] $t=\min(B_i)$, $|B_i|>1$, and $t\neq \min(B_j)$,
\item[(2)] $t=\min(B_j)$, $|B_j|>1$, and $t\neq \min(B_i)$.
\end{itemize}

Given a linked partition $P$ of $[n]$, 
a block $\{i_1,i_2,\ldots,i_m\}$, $i_1<i_2<\cdots<i_m$ of $P$ is represented
by the set of pairs $\{(i_1,i_2), (i_1,i_3), \ldots, (i_1, i_m)\}$.
More generally, a linked partition is represented by the union of 
all set of pairs, the union being taken over all its blocks.
This representation is called the {\it standard representation} of the linked partition.
For example, the linked partition $\{\{1,3,6\},\{2,5,8\},\{4\},\{5,9\},\{6,7\}\}$
is represented as the set $\{(1,3),(1,6),(2,5),(2,8),(5,9),(6,7)\}$.
Next, one defines a {\it $k$-crossing} of a linked partition to be a subset
$\{(i_1,j_1),(i_2,j_2),\dots,(i_k,j_k)\}$ of
its standard representation where $i_1<i_2<\dots<i_k<j_1<j_2<\dots<j_k$. 
Similarly, one defines a {\it $k$-nesting} of a linked partition 
to be a subset $\{(i_1,j_1),(i_2,j_2),\dots,(i_k,j_k)\}$ of
its standard representation where
$i_1<i_2<\dots<i_k<j_k<\dots<j_2<j_1$. (These notions have 
intuitive pictorial meanings if one connects a pair $(i,j)$ in the
standard representation of a linked partition by an arc, cf.\ \cite{CGP}.)
Finally, given a linked partition $P$, we write $\mathrm{cross}(P)$ for the maximal
number $k$ such that $P$ has a $k$-crossing, and we write $\mathrm{nest}(P)$ 
for the maximal number $k$ such that $P$ has a $k$-nesting.

Given a linked partition $P$, let $\mathrm{comp1}(P)$ be the set of the first components 
of the pairs of its standard representation,
and let $\mathrm{comp2}(P)$ be the set of the second components of the pairs of its standard representation.
Then Theorem~4.2 from \cite{CGP} reads as follows.

\begin{thm}[\cite{CGP}] \label{Thm5} 
Let $n,x,y$ be positive integers, and let $S$ and $T$ be two subsets of $[n]$. 
Then the number of linked partitions $P$ of $[n]$ with $\mathrm{cross}(P)=x$,
$\mathrm{nest}(P)=y$, $\mathrm{comp1}(P)=S$, $\mathrm{comp2}(P)=T$ is equal to the 
number of linked partitions of $[n]$ with $\mathrm{cross}(P)=y$,
$\mathrm{nest}(P)=x$, $\mathrm{comp1}(P)=S$, $\mathrm{comp2}(P)=T$.
\end{thm}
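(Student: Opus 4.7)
My plan is to deduce Theorem~\ref{Thm5} by encoding linked partitions into $01$-fillings of a stack polyomino and then applying Corollary~\ref{Cor1}. Fix $S, T \subseteq [n]$ and let $\mathcal{F}(S,T)$ be the shape whose cells are indexed by pairs $(s,t) \in S \times T$ with $s < t$. Each linked partition $P$ with $\mathrm{comp1}(P) = S$ and $\mathrm{comp2}(P) = T$ becomes the $01$-filling of $\mathcal{F}(S,T)$ that places a $1$ at $(s,t)$ exactly when $(s,t)$ belongs to the standard representation of $P$. Since an element of $[n]$ can be a second component of at most one pair of a linked partition, each column contains at most one $1$, and the conditions $\mathrm{comp2}(P)=T$ and $\mathrm{comp1}(P)=S$ translate respectively to ``exactly one $1$ per column'' and ``every row contains at least one $1$''. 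After reversing the order of the columns to make the rows left-justified, $\mathcal{F}(S,T)$ becomes a Ferrers shape in French notation, and hence a stack polyomino; under this encoding, a $k$-crossing of $P$ becomes a chain of length $k$ of one type and a $k$-nesting a chain of length $k$ of the other type (the roles of $ne$ and $se$ are interchanged by the column reversal, but which is which is immaterial for the symmetry conclusion).

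I would then apply Corollary~\ref{Cor1} to $\mathcal{F}(S,T)$ to obtain a bijection $\Phi$ on its $01$-fillings with at most one $1$ per column that preserves the total number of $1$'s and interchanges the lengths of the longest $ne$- and $se$-chains. The column constraint ``exactly one $1$ per column'' is preserved by $\Phi$: indeed, the rectangle bijection $\varphi_{R,R'}$ of Proposition~\ref{prop:moveup} reconstructs the output filling via the Hecke recording tableau $Q$, which records the column positions of the input letters, so the set of columns that carry a $1$ is invariant under $\Phi$. Consequently, translating back to linked partitions, $\Phi$ interchanges cross and nest while preserving the number of pairs and the second-component set $T$.

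The crux of the argument is to show that $\Phi$ also preserves the row constraint ``every row contains at least one $1$'' (equivalently, the first-component set $S$). Row sums are not preserved by $\varphi_{R,R'}$ in general, so this requires exploiting a special feature of the present setting: when every column of the rectangle $R$ carries exactly one $1$, the encoded word has a distinct letter in each column and its standardization is a permutation. I would use Proposition~\ref{Pro4} together with the $K$-Knuth machinery of Lemma~\ref{Pro1}, Proposition~\ref{Pro2}, and Theorem~\ref{Thm3} to track how the jeu de taquin step that produces $P'$ from $P$ in the definition of $\varphi_{R,R'}$ affects the set of row indices appearing in the decoded word, with the aim of proving that the row support is preserved. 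Once this invariance is established, $\Phi$ restricts to a bijection on the subset of fillings of $\mathcal{F}(S,T)$ that correspond to linked partitions with the prescribed $S$ and $T$, yielding the joint symmetry of cross and nest and hence Theorem~\ref{Thm5}. Verifying this row-support invariance for permutation-type words is the principal technical obstacle, since the original bijection of Proposition~\ref{prop:moveup} is precisely engineered to preserve $ne$- and $se$-chains and column sums but is silent about row occupancy.
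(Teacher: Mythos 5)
Your overall strategy is the paper's: encode linked partitions with prescribed first- and second-component sets as $01$-fillings of a Ferrers-type shape, identify crossings and nestings with \emph{se}- and \emph{ne}-chains, and apply the bijection underlying Theorem~\ref{Thm1} (composed with the horizontal reflection) while checking that it respects which rows and columns are occupied. (The paper works with the full triangle $\vartriangle_n$ and tracks the sets of nonempty rows and columns, rather than restricting to the subshape indexed by $S\times T$; that difference is cosmetic.) The problem is that you leave precisely the decisive step unproved: you declare the preservation of row support ``the principal technical obstacle'' and only sketch a plan for it, and that plan rests on a false premise. Having exactly one $1$ per column does not make the encoded word a permutation: a first component may be paired with several second components (the block $\{1,3,6\}$ gives the pairs $(1,3)$ and $(1,6)$), so a single row can carry several $1$'s and the word has repeated letters; its standardization is not a permutation. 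Note also that Corollary~\ref{Cor1} is only a counting statement, so the argument has to be run at the level of the maps $f$ and $\varphi_{R,R'}$, as you implicitly do.

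What you describe as an obstacle is in fact essentially built into the construction of $\varphi_{R,R'}$, so the missing step is short but must be stated. By definition, the empty columns of $T$ are empty columns of $T'$ in the same positions, and the empty rows of $T$ are carried to prescribed empty rows of $T'$ (shifted along with the moved bottom row); on the nonempty columns $T'$ is the matrix representation of $w'$, so each of them carries exactly one $1$. The only point requiring an argument is that no nonempty row of $T$ becomes empty, i.e.\ that $w'$ contains every letter $1,\dots,m$, where $m$ is the number of nonempty rows of $T$ inside $R$. This holds because $w'\equiv\mathrm{row}(P')$ by Lemma~\ref{Pro3}, $K$-Knuth equivalence preserves the set of letters of a word (Lemma~\ref{Pro1} with $[a,b]=[a,a]$), and $P'$ contains every value $1,\dots,m$: the entry $1$ can occupy only the box $(1,1)$ of the increasing tableau $P$, the jeu de taquin slide permutes boxes without changing the entry set $\{2,\dots,m\}$ of $P/1$, and Step~3 reinstates the maximum value. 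With this observation (the corresponding statement for the reflection being trivial), your map $\Phi$ does preserve ``exactly one $1$ per column'' and ``every row nonempty,'' and the deduction of Theorem~\ref{Thm5} then goes through exactly as in the paper.
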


There is a one-to-one correspondence between linked partitions on $[n]$
and fillings of $\vartriangle_n$, the triangular shape with 
$n-1$ squares in the bottom row, $n-2$ squares in the row above, etc., and 
$1$ square in the top-most row.  We represent a linked partition $P$ of $[n]$, given by its standard representation,
as a filling, by putting an $X$ to the square in row $i$ (from top to bottom) and column $j$ (counted from right to left, including one empty column) if and only if $(i,j)$ is a pair in the standard representation of $P$. See Figure~\ref{fig:18} for two examples in which $n=7$.  (The labeling of the corners and edges of the fillings should be ignored at this point.)  This  defines a correspondence between linked partitions of $[n]$ and 01-fillings of $\vartriangle_n$ with at most one 1 in each column. Moreover, a $k$-crossing of $P$ corresponds to a $se$-chain of length $k$ in $\vartriangle_n$, and a $k$-nesting of $P$ corresponds to a $ne$-chain of length $k$ in $\vartriangle_n$.

\begin{figure}[h]
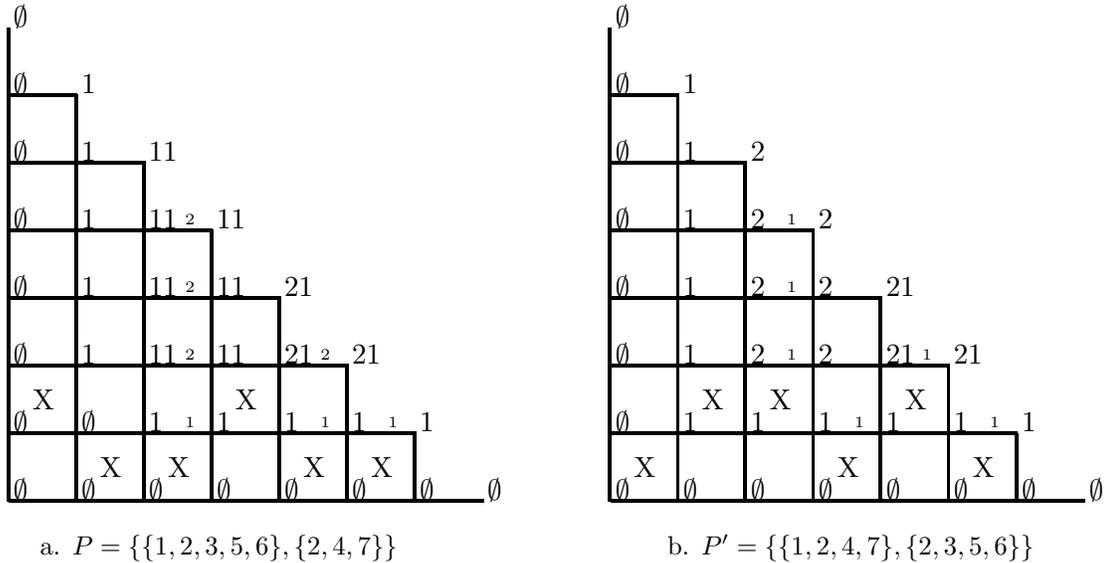

$$
\Einheit.3cm
\Pfad(0,21),666666666666666666666111111111111111111111\endPfad
\Pfad(0,18),111666666666666666666\endPfad
\Pfad(0,15),111111666666666666666\endPfad
\Pfad(0,12),111111111666666666666\endPfad
\Pfad(0,9),111111111111666666666\endPfad
\Pfad(0,6),111111111111111666666\endPfad
\Pfad(0,3),111111111111111111666\endPfad
\Label\ro{\emptyset}(0,0)
\Label\ro{\emptyset}(0,3)
\Label\ro{\emptyset}(0,6)
\Label\ro{\emptyset}(0,9)
\Label\ro{\emptyset}(0,12)
\Label\ro{\emptyset}(0,15)
\Label\ro{\emptyset}(0,18)
\Label\ro{\emptyset}(0,21)
\Label\ro{1}(3,18)
\Label\ro{1}(3,15)
\Label\ro{1}(3,12)
\Label\ro{1}(3,9)
\Label\ro{1}(3,6)
\Label\ro{\emptyset}(3,3)
\Label\ro{\emptyset}(3,0)
\Label\ro{\hphantom{1}11}(6,15)
\Label\ro{\hphantom{1}11}(6,12)
\Label\ro{\hphantom{1}11}(6,9)
\Label\ro{\hphantom{1}11}(6,6)
\Label\ro{1}(6,3)
\Label\ro{\emptyset}(6,0)
\Label\ro{\hphantom{1}11}(9,12)
\Label\ro{\hphantom{1}11}(9,9)
\Label\ro{\hphantom{1}11}(9,6)
\Label\ro{1}(9,3)
\Label\ro{\emptyset}(9,0)
\Label\ro{\hphantom{1}21}(12,9)
\Label\ro{\hphantom{1}21}(12,6)
\Label\ro{1}(12,3)
\Label\ro{\emptyset}(12,0)
\Label\ro{\hphantom{2}21}(15,6)
\Label\ro{1}(15,3)
\Label\ro{\emptyset}(15,0)
\Label\ro{1}(18,3)
\Label\ro{\emptyset}(18,0)
\Label\ro{\emptyset}(21,0)
\Label\ro{\tiny\text{1}}(7.5,3)
\Label\ro{\tiny\text{2}}(7.5,6)
\Label\ro{\tiny\text{2}}(7.5,9)
\Label\ro{\tiny\text{2}}(7.5,12)
\Label\ro{\tiny\text{1}}(13.5,3)
\Label\ro{\tiny\text{2}}(13.5,6)
\Label\ro{\tiny\text{1}}(16.5,3)
\Label\ro{\text {X}}(1,4)
\Label\ro{\text {X}}(4,1)
\Label\ro{\text {X}}(7,1)
\Label\ro{\text {X}}(10,4)
\Label\ro{\text {X}}(13,1)
\Label\ro{\text {X}}(16,1)
\hbox{\hskip8cm}
\Pfad(0,21),666666666666666666666111111111111111111111\endPfad
\Pfad(0,18),111666666666666666666\endPfad
\Pfad(0,15),111111666666666666666\endPfad
\Pfad(0,12),111111111666666666666\endPfad
\Pfad(0,9),111111111111666666666\endPfad
\Pfad(0,6),111111111111111666666\endPfad
\Pfad(0,3),111111111111111111666\endPfad
\Label\ro{\emptyset}(0,0)
\Label\ro{\emptyset}(0,3)
\Label\ro{\emptyset}(0,6)
\Label\ro{\emptyset}(0,9)
\Label\ro{\emptyset}(0,12)
\Label\ro{\emptyset}(0,15)
\Label\ro{\emptyset}(0,18)
\Label\ro{\emptyset}(0,21)
\Label\ro{1}(3,18)
\Label\ro{1}(3,15)
\Label\ro{1}(3,12)
\Label\ro{1}(3,9)
\Label\ro{1}(3,6)
\Label\ro{1}(3,3)
\Label\ro{\emptyset}(3,0)
\Label\ro{2}(6,15)
\Label\ro{2}(6,12)
\Label\ro{2}(6,9)
\Label\ro{2}(6,6)
\Label\ro{1}(6,3)
\Label\ro{\emptyset}(6,0)
\Label\ro{2}(9,12)
\Label\ro{2}(9,9)
\Label\ro{2}(9,6)
\Label\ro{1}(9,3)
\Label\ro{\emptyset}(9,0)
\Label\ro{\hphantom{1}21}(12,9)
\Label\ro{\hphantom{1}21}(12,6)
\Label\ro{1}(12,3)
\Label\ro{\emptyset}(12,0)
\Label\ro{\hphantom{2}21}(15,6)
\Label\ro{1}(15,3)
\Label\ro{\emptyset}(15,0)
\Label\ro{1}(18,3)
\Label\ro{\emptyset}(18,0)
\Label\ro{\emptyset}(21,0)
\Label\ro{\tiny\text{1}}(7.5,6)
\Label\ro{\tiny\text{1}}(7.5,9)
\Label\ro{\tiny\text{1}}(7.5,12)
\Label\ro{\tiny\text{1}}(10.5,3)
\Label\ro{\tiny\text{1}}(13.5,6)
\Label\ro{\tiny\text{1}}(16.5,3)
\Label\ro{\text {X}}(1,1)
\Label\ro{\text {X}}(4,4)
\Label\ro{\text {X}}(7,4)
\Label\ro{\text {X}}(10,1)
\Label\ro{\text {X}}(13,4)
\Label\ro{\text {X}}(16,1)
\hskip6cm
$$
\centerline{\small a. $P=\{\{1,2,3,5,6\},\{2,4,7\}\}$
\hskip3.5cm
b. $P^\prime=\{\{1,2,4,7\},\{2,3,5,6\}\}$}
\hskip1cm
\caption{Bijection on linked partitions using Hecke growth diagrams.} 
\label{fig:18}
\end{figure}

If we specialize Theorem~\ref{Thm1} to the case where $\mathcal{M}=\vartriangle_n$
and $\sigma\mathcal{M}=\triangledown_n$ is a reflection of $\mathcal{M}$ through a horizontal line,
we obtain \[N(\vartriangle_n;n;ne=u,se=v)=N(\triangledown_n;n;ne=u,se=v).\]
On the other hand reflecting the polyomino with a filling about a horizontal line exchanges the \emph{se}- and \emph{ne}-chains, so we have
\[ N(\triangledown_n;n;ne=u,se=v) =  N(\vartriangle_n;n;ne=v,se=u). \]

The set of the first (resp. second) components of the linked partition correspond to the nonempty rows (resp. columns). Since our bijection $f$ in Section~\ref{S:proof} and the reflection both preserve the empty rows and columns, we obtain Theorem~\ref{Thm5}.

\textbf{2.} The proof of Theorem~\ref{Thm5} in~\cite{CGP} goes via a bijection $\phi$ between linked partitions and \emph{vacillating Hecke tableaux} of empty shape. These are certain sequences of Hecke diagrams, which are Young diagrams with a possibly marked corner. For a linked partition $P$,  the corresponding vacillating Hecke tableaux $\phi(P)$ is obtained via Hecke insertion. The maximum number of columns and rows in the Hecke diagrams in $\phi(P)$ is equal to $\mathrm{ness}(P)$ and $\mathrm{cross}(P)$. Transposing a vacillating Hecke tableaux of empty shape yields a vacillating Hecke tableaux of empty shape, so applying $\phi^{-1}$ yields a bijection on linked partitions which preserves the first and second components, which proves Theorem~\ref{Thm5}.

Though we omit the full definition of vacillating Hecke tableaux here, we note that even though the authors in \cite{CGP} do not use the language of growth diagrams, $\phi(P)$ is equivalent to the sequence of labels of the vertices and the horizontal edges along the top right border of the Hecke growth diagram of $P$ obtained as explained in Section~\ref{S:tools}.  The sequence should be read from bottom right to top left. The labels of the horizontal edges indicate the rows which have marked corners.  For example, the vacillating Hecke tableaux $\phi(P)$ for the linked partition $P$ in Figure~\ref{fig:18}a is given in Figure~\ref{fig:vactab}. This matches the example shown in Figure 12 in~\cite{CGP}. Transposing the vacillating Hecke tableau corresponds to transposing the partitions along the top right border of $\vartriangle_n$ and changing the labels of the horizontal edges along that border to indicate the column in which the marked corner appears. Then by applying the reverse local growth rules, we obtain a filling which represents a linked partition, like in Figure~\ref{fig:18}b.

Note that this description of the map in~\cite{CGP} in terms of Hecke growth diagrams is analogous to the description of the map from~\cite{CDDSY} between set partitions and vacillating tableaux that Krattenthaler~\cite{Krattenthaler} gave in terms of Fomin's growth diagrams for RSK.

\begin{figure}[h]
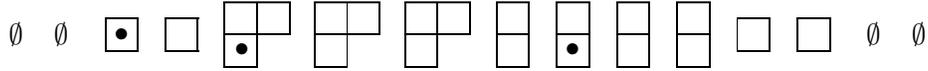

\begin{center}
$$
\hspace*{3cm} 
\Einheit.4cm
\Label\ro{\emptyset}(10.5,0)
\Label\ro{\emptyset}(9,0)
\Label\ro{\yng(1)}(7,0)
\Label\ro{\yng(1)}(5,0)
\Label\ro{\yng(1,1)}(3,0)
\Label\ro{\yng(1,1)}(1,0)
\Label\ro{\yng(1,1)}(-1,0)
\Label\ro{ \small{\bullet}}(-1,-0.5)
\Label\ro{\yng(1,1)}(-3,0)
\Label\ro{\yng(2,1)}(-5.5,0)
\Label\ro{\yng(2,1)}(-8.5,0)
\Label\ro{\yng(2,1)}(-11.5,0)
\Label\ro{\small{\bullet}}(-12,-0.5)
\Label\ro{\yng(1)}(-14,0)
\Label\ro{\yng(1)}(-16,0)
\Label\ro{\bullet}(-16,0)
\Label\ro{\emptyset}(-18,0)
\Label\ro{\emptyset}(-19.5,0)
$$
\end{center}
\caption{A vacillating Hecke tableau for the linked partition $P = \{\{1,2,3,5,6\},\{2,4,7\}\}$.}
\label{fig:vactab}
\end{figure}

Since our map $f$ from Section~\ref{S:proof} preserves the 1's outside the maximal rectangle that contains the main row, the bijection on linked partitions that we get by successively applying $f$ between the fillings of $\vartriangle_n$ and $\triangledown_n$ is different from the one in~\cite{CGP}. Indeed, consider the linked partition:
$$L = \{\{1,2,4,7,10\},\{2,3,6,15\},\{3,11\},\{4,5,8\},\{5,12,14,16\},\{6,9\},\{7,13\}\}.$$
The corresponding linked partition that we get via our bijection is
$$L_{1} = \{\{1,2,3,8,9,13\},\{2,5,14\},\{3,4,6\},\{4,12,16\},\{5,7,11\},\{6,15\},\{7,10\}\},$$
while the bijection in~\cite{CGP} produces 
$$L_{2}=\{\{1,2,3,13\},\{2,5,14\},\{3,4,6\},\{4,12,16\},\{5,7,8,9\},\{6,11,15\},\{7,10\}\}.$$

\section{Discussion}\label{S:discussion}


In this section we comment on possible and impossible extensions of the results in this paper.

\textbf{1.} As observed in~\cite{MR2274298}, Theorem~\ref{Thm1} doesn't hold if we fix the number of 1's in each row. Let $N^{(a_{1},a_{2}, \cdots)}(\mathrm{F}; n; ne = u, se = v)$ be the number of fillings of the Ferrers shape $\mathrm{F}$ (in French notation) such that: the sum of entries equal to $n$, the longest \emph{ne}-chain has length $u$, the longest \emph{se}-chain has length $v$, and with $a_{i}$ 1's in row $i$, $i = 1, 2, \dots$. Then for $\mathrm{F} = (7,7,6,4)$, we have $N^{(4,1,1,1)}(\mathrm{F}; n = 7; ne = 4, se = 1) = 0$, but $N^{(4,1,1,1)}(\mathrm{F}; n = 7; ne = 1, se = 4) = 1$. 

\textbf{2.} In Section~\ref{S:proof}, we showed
\begin{equation*} \label{eq1}
N(\mathcal{M};n;ne=u, se=v)=N(\mathcal{M}';n;ne=u, se=v)
\end{equation*}
where $\mathcal{M}'$ is a stack polyomino obtained by moving the {\it bottom} row of 
$\mathcal{M}$ up as much as possible, by using the transformation $\varphi_{R, R'}$ within the maximal rectangle containing the bottom row, while the rest of the filling remained the same. One may ask whether one can use the same transformation $\varphi$ on fillings of rectangles to move an arbitrary row of a stack polyomino up 
(such that the result is again a stack polyomino) so that the $\mathrm{ne}$ and $\mathrm{se}$ statistics are preserved. 

The answer to this question is negative. In the following we give a counterexample.
Figure~\ref{fig:19} shows a filling $F$ of a stack polyomino $\mathcal{M}$ with $ne=5$ and $se=6$.
Moving the row $r$ (third from below) of $\mathcal{M}$ to the top produces the stack polyomino $\mathcal{M^\prime}$ in 
Figure~\ref{fig:20}. Let $R$ (resp. $R^\prime$) be the largest rectangle contained in $\mathcal{M}$ (resp. $\mathcal{M^\prime}$)
that contains $r$ (resp. $r^\prime$).
By applying the bijection $\varphi_{R,R^\prime}$ (defined in Section~\ref{S:proof})
between the filling of $R$ and the filling of $R^\prime$,
while all entries out of $R'$ stay the same, one obtains the filling of $\mathcal{M^\prime}$ 
in Figure~\ref{fig:20}, for which $ne=4$ and $se=6$.

\begin{figure}[h]
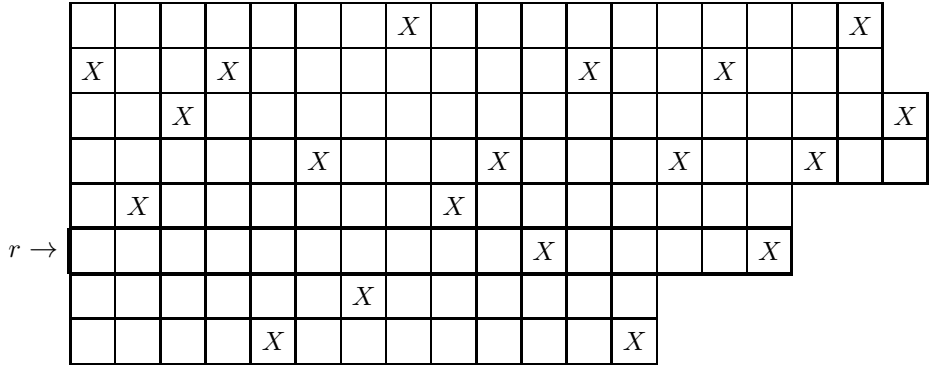

$$
\Einheit.2cm
\PfadDicke{.5pt}
\Pfad(0,24),111111111111111111111111111111111111111111111111111111\endPfad
\Pfad(0,21),111111111111111111111111111111111111111111111111111111\endPfad
\Pfad(0,18),111111111111111111111111111111111111111111111111111111111\endPfad
\Pfad(0,15),111111111111111111111111111111111111111111111111111111111\endPfad
\Pfad(0,12),111111111111111111111111111111111111111111111111111111111\endPfad
\Pfad(0,3),111111111111111111111111111111111111111\endPfad
\Pfad(0,0),111111111111111111111111111111111111111\endPfad
\Pfad(0,0),222222222222222222222222\endPfad
\Pfad(3,0),222222222222222222222222\endPfad
\Pfad(6,0),222222222222222222222222\endPfad
\Pfad(9,0),222222222222222222222222\endPfad
\Pfad(12,0),222222222222222222222222\endPfad
\Pfad(15,0),222222222222222222222222\endPfad
\Pfad(18,0),222222222222222222222222\endPfad
\Pfad(21,0),222222222222222222222222\endPfad
\Pfad(24,0),222222222222222222222222\endPfad
\Pfad(27,0),222222222222222222222222\endPfad
\Pfad(30,0),222222222222222222222222\endPfad
\Pfad(33,0),222222222222222222222222\endPfad
\Pfad(36,0),222222222222222222222222\endPfad
\Pfad(39,0),222222222222222222222222\endPfad
\Pfad(42,6),222222222222222222\endPfad
\Pfad(45,6),222222222222222222\endPfad
\Pfad(48,6),222222222222222222\endPfad
\Pfad(51,12),222222222222\endPfad
\Pfad(54,12),222222222222\endPfad
\Pfad(57,12),222222\endPfad
\PfadDicke{1pt}
\Pfad(0,9),111111111111111111111111111111111111111111111111\endPfad
\Pfad(0,6),111111111111111111111111111111111111111111111111\endPfad
\Pfad(0,6),222\endPfad
\Pfad(48,6),222\endPfad
\Label\ro{\text {\small$X$}}(1,19)
\Label\ro{\text {\small$X$}}(4,10)
\Label\ro{\text {\small$X$}}(7,16)
\Label\ro{\text {\small$X$}}(10,19)
\Label\ro{\text {\small$X$}}(13,1)
\Label\ro{\text {\small$X$}}(16,13)
\Label\ro{\text {\small$X$}}(19,4)
\Label\ro{\text {\small$X$}}(22,22)
\Label\ro{\text {\small$X$}}(25,10)
\Label\ro{\text {\small$X$}}(28,13)
\Label\ro{\text {\small$X$}}(31,7)
\Label\ro{\text {\small$X$}}(34,19)
\Label\ro{\text {\small$X$}}(37,1)
\Label\ro{\text {\small$X$}}(40,13)
\Label\ro{\text {\small$X$}}(43,19)
\Label\ro{\text {\small$X$}}(46,7)
\Label\ro{\text {\small$X$}}(49,13)
\Label\ro{\text {\small$X$}}(52,22)
\Label\ro{\text {\small$X$}}(55,16)
\Label\ro{\text {\small$r$}\rightarrow}(-3,7)
\hskip12cm
$$
\caption{A filling of $\mathcal{M}$ with $ne=5,\,se=6$.}
\label{fig:19}
\end{figure}

\begin{figure}[t]
$$
\Einheit.2cm
\PfadDicke{.5pt}
\Pfad(48,21),111111\endPfad
\Pfad(0,18),111111111111111111111111111111111111111111111111111111\endPfad
\Pfad(0,15),111111111111111111111111111111111111111111111111111111111\endPfad
\Pfad(0,12),111111111111111111111111111111111111111111111111111111111\endPfad
\Pfad(0,9),111111111111111111111111111111111111111111111111111111111\endPfad
\Pfad(0,6),111111111111111111111111111111111111111111111111\endPfad
\Pfad(0,3),111111111111111111111111111111111111111\endPfad
\Pfad(0,0),111111111111111111111111111111111111111\endPfad
\Pfad(0,0),222222222222222222222222\endPfad
\Pfad(3,0),222222222222222222222222\endPfad
\Pfad(6,0),222222222222222222222222\endPfad
\Pfad(9,0),222222222222222222222222\endPfad
\Pfad(12,0),222222222222222222222222\endPfad
\Pfad(15,0),222222222222222222222222\endPfad
\Pfad(18,0),222222222222222222222222\endPfad
\Pfad(21,0),222222222222222222222222\endPfad
\Pfad(24,0),222222222222222222222222\endPfad
\Pfad(27,0),222222222222222222222222\endPfad
\Pfad(30,0),222222222222222222222222\endPfad
\Pfad(33,0),222222222222222222222222\endPfad
\Pfad(36,0),222222222222222222222222\endPfad
\Pfad(39,0),222222222222222222222222\endPfad
\Pfad(42,6),222222222222222222\endPfad
\Pfad(45,6),222222222222222222\endPfad
\Pfad(48,6),222222222222222222\endPfad
\Pfad(51,9),222222222222\endPfad
\Pfad(54,9),222222222222\endPfad
\Pfad(57,9),222222\endPfad
\PfadDicke{1pt}
\Pfad(0,24),111111111111111111111111111111111111111111111111\endPfad
\Pfad(0,21),111111111111111111111111111111111111111111111111\endPfad
\Pfad(0,21),222\endPfad
\Pfad(48,21),222\endPfad
\Label\ro{\text {\small$X$}}(1,22)
\Label\ro{\text {\small$X$}}(4,7)
\Label\ro{\text {\small$X$}}(7,13)
\Label\ro{\text {\small$X$}}(10,16)
\Label\ro{\text {\small$X$}}(13,1)
\Label\ro{\text {\small$X$}}(16,13)
\Label\ro{\text {\small$X$}}(19,4)
\Label\ro{\text {\small$X$}}(22,19)
\Label\ro{\text {\small$X$}}(25,10)
\Label\ro{\text {\small$X$}}(28,10)
\Label\ro{\text {\small$X$}}(31,7)
\Label\ro{\text {\small$X$}}(34,16)
\Label\ro{\text {\small$X$}}(37,1)
\Label\ro{\text {\small$X$}}(40,10)
\Label\ro{\text {\small$X$}}(43,16)
\Label\ro{\text {\small$X$}}(46,7)
\Label\ro{\text {\small$X$}}(49,10)
\Label\ro{\text {\small$X$}}(52,19)
\Label\ro{\text {\small$X$}}(55,13)
\Label\ro{\text {\small$r^\prime$}\rightarrow}(-3,22)
\hskip12cm
$$
\caption{A filling of $\mathcal{M^\prime}$ with $ne=4,\,se=6$.}
\label{fig:20}
\end{figure}

\textbf{3.} With a help of a computer program, we computed $\sum_{M\in\mathcal{F}(\mathcal{M})} x^{\ne(M) } y^{\se(M)}$ for different moon polyominoes $\mathcal{M}$ and, based on these calculations, we make the following conjecture.

\begin{conj} Let $\mathcal{M}$ be a moon polyomino. The number $N(\mathcal{M};n;ne=u;se=v)$ only depends on the lengths of the rows of $\mathcal{M}$, not the actual shape. In other words, if $\mathcal{M}_{1}$ and $\mathcal{M}_{2}$ are  obtained by permuting the rows and columns of $\mathcal{M}$, respectively, then
\[ N(\mathcal{M};n;ne=u;se=v) = N(\mathcal{M}_{1};n;ne=u;se=v) = N(\mathcal{M}_{2};n;ne=u;se=v).\]
\end{conj}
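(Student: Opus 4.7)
The plan is to reduce the conjecture to Theorem~\ref{Thm1} for stack polyominoes by handling column permutations and row permutations separately. For column permutations, I would use a transposition symmetry: reflecting a moon polyomino through the main diagonal yields another moon polyomino, and the induced bijection on fillings sends $\mathrm{ne}$-chains to $\mathrm{ne}$-chains (both coordinates increase in either orientation) and $\mathrm{se}$-chains to $\mathrm{nw}$-chains of the same length. Thus a column permutation of $\mathcal{M}$ corresponds to a row permutation of $\mathcal{M}^T$, reducing this case to the row-permutation invariance.

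For row permutations within the class of moon polyominoes, I would follow the blueprint of Proposition~\ref{prop:moveup}: decompose an arbitrary row permutation into elementary moves in which a single row is shifted to another admissible position, and for each such move construct a bijection between fillings of $\mathcal{M}$ and $\mathcal{M}'$ by applying $\varphi_{R,R'}$ inside the maximal rectangle containing the moving row while keeping the rest of the filling fixed. When the moving row is the top or bottom row of $\mathcal{M}$, the argument of Proposition~\ref{prop:moveup} should go through essentially unchanged, because the maximal rectangle then spans the full horizontal extent of every row interacting with the moving one. Iterating such boundary moves, one can peel any moon polyomino one row at a time down to a stack polyomino with the same row-length multiset, at which point Theorem~\ref{Thm1} closes the argument.

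The hard part will be handling moves of interior rows, for which the counterexample in Section~\ref{S:discussion} shows that the direct application of $\varphi_{R,R'}$ does not preserve $(\mathrm{ne}, \mathrm{se})$. A natural way to bypass this difficulty is to restrict to boundary moves: to relocate an interior row, first peel the rows above or below it (using Proposition~\ref{prop:moveup} and its top-row analogue obtained by vertical reflection) until the target row becomes a boundary row, shift it, and then restore the peeled rows. The main technical obstacle is then proving that the composite bijection preserves the joint $(\mathrm{ne},\mathrm{se})$ statistic, since the intermediate polyominoes may not themselves be moon polyominoes and the chain lengths may temporarily change between peel and unpeel. An alternative route is to adapt the Rubey and Poznanovi\'c--Yan strategy for moon polyominoes~\cite{Rubey, PY}, which handled the $\mathrm{ne}$ statistic alone by a sequence of column-slide bijections, to the Hecke insertion and $K$-jeu de taquin framework of this paper, so that the joint $(\mathrm{ne}, \mathrm{se})$ statistic is preserved throughout. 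I expect verifying such a $K$-Knuth-equivalence-based argument across the entire moon polyomino, rather than just within a single rectangle, to be the crux of any complete proof.
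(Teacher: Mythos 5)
This statement is not proved in the paper: it is stated as a conjecture in Section~\ref{S:discussion}, supported only by computer calculations, and the same section exhibits counterexamples showing that the natural extensions of the paper's own method break down. Your proposal is therefore an attempt at an open problem, and as written it does not close it. Two of your concrete steps fail. First, the reduction of column permutations to row permutations by reflecting in the main diagonal does not preserve the counting problem: transposition does preserve the lengths of $\mathrm{ne}$- and $\mathrm{se}$-chains (an $\mathrm{se}$-chain read backwards is your ``nw-chain''), but it converts the defining restriction ``at most one $1$ per column'' into ``at most one $1$ per row''. What you obtain is row-permutation invariance for \emph{row}-restricted fillings of $\mathcal{M}^{T}$, which is a different quantity from $N(\cdot\,;n;ne=u,se=v)$ and is covered neither by Theorem~\ref{Thm1} nor by your row-move argument, since all of the Hecke-insertion machinery encodes column-restricted fillings as words with one letter per column.

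Second, your key claim that moves of the top or bottom row of a moon polyomino ``go through essentially unchanged'' is refuted by the paper itself: the filling of the moon polyomino in Figure~\ref{fig:24} has $\mathrm{ne}=5$, $\mathrm{se}=5$, and moving its bottom row up while applying $\varphi_{R,R'}$ inside the maximal rectangle containing that row (everything else fixed) produces the filling in Figure~\ref{fig:25} with $\mathrm{ne}=4$. The proof of Proposition~\ref{prop:moveup} genuinely uses left-justification: in a stack polyomino the column sets of the maximal rectangles are nested as left-aligned prefixes, so a narrower rectangle meets $R$ in a left-aligned block with the leftover $\alpha$ sitting on top (which is what makes the recording-tableau/growth-diagram locality argument work), and a wider rectangle protrudes only to the right, so its leftover $\beta$ is appended as a suffix in the step $I(w_2)\equiv I(w_2^\prime)\Rightarrow I(w_2\beta)\equiv I(w_2^\prime\beta)$. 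For a general moon polyomino a wider rectangle protrudes on both sides of $R$ and a narrower one need not be aligned with $R$, and Figures~\ref{fig:24}--\ref{fig:25} show this is a real obstruction rather than a technicality (Figures~\ref{fig:19}--\ref{fig:20} rule out the analogous interior-row move even for stack polyominoes). Your remaining ideas, peeling rows to the boundary and restoring them, or adapting the Rubey/Poznanovi\'c--Yan slides to the $K$-theoretic setting, are, as you yourself note, unverified plans with the crucial preservation statements left unproved, so the conjecture remains open after your proposal.
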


For example, let $\mathcal{M}_1$, $\mathcal{M}_2$, $\mathcal{M}_3$ and $\mathcal{M}_4$ be the moon polyominoes given in Figure~\ref{fig:22}.
For $1\leq i \leq 4$, let $G_i(x,y)=\sum_{M\in\mathcal{F}(\mathcal{M}_i)} x^{\ne(M) } y^{\se(M)}$
be the joint distribution of $(ne,se)$ over fillings of $\mathcal{M}_i^\prime$. Then
\begin{align*}  
G_1(x,y) &=G_2(x,y)=G_3(x,y)=G_4(x,y)\nonumber \\ 
&=40x^3y^3+238(x^3y^2+x^2y^3)+4(x^3y+xy^3)+348x^2y^2+2(x^2y+xy^2).
\end{align*}

\begin{figure}[b]
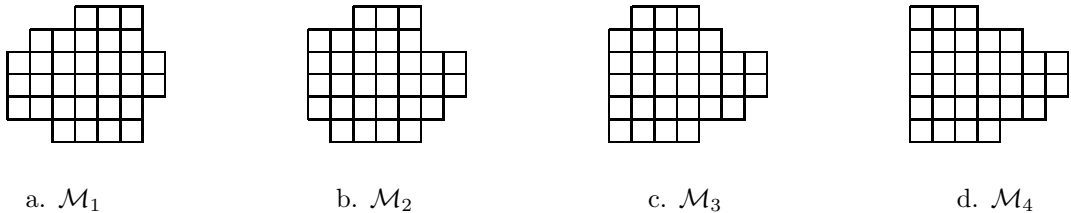

$$
\Einheit.1cm
\PfadDicke{.5pt}
\Pfad(0,3),222222222\endPfad
\Pfad(3,3),222222222222\endPfad
\Pfad(6,0),222222222222222\endPfad
\Pfad(9,0),222222222222222222\endPfad
\Pfad(12,0),222222222222222222\endPfad
\Pfad(15,0),222222222222222222\endPfad
\Pfad(18,0),222222222222222222\endPfad
\Pfad(21,6),222222\endPfad
\Pfad(9,18),111111111\endPfad
\Pfad(3,15),111111111111111\endPfad
\Pfad(0,12),111111111111111111111\endPfad
\Pfad(0,9),111111111111111111111\endPfad
\Pfad(0,6),111111111111111111111\endPfad
\Pfad(0,3),111111111111111111\endPfad
\Pfad(6,0),111111111111\endPfad
\hbox{\hskip4cm}
\PfadDicke{.5pt}
\Pfad(18,3),222222222\endPfad
\Pfad(0,3),222222222222\endPfad
\Pfad(3,0),222222222222222\endPfad
\Pfad(6,0),222222222222222222\endPfad
\Pfad(9,0),222222222222222222\endPfad
\Pfad(12,0),222222222222222222\endPfad
\Pfad(15,0),222222222222222222\endPfad
\Pfad(21,6),222222\endPfad
\Pfad(6,18),111111111\endPfad
\Pfad(0,15),111111111111111\endPfad
\Pfad(0,12),111111111111111111111\endPfad
\Pfad(0,9),111111111111111111111\endPfad
\Pfad(0,6),111111111111111111111\endPfad
\Pfad(0,3),111111111111111111\endPfad
\Pfad(3,0),111111111111\endPfad
\hbox{\hskip4cm}
\PfadDicke{.5pt}
\Pfad(18,3),222222222\endPfad
\Pfad(15,3),222222222222\endPfad
\Pfad(0,0),222222222222222\endPfad
\Pfad(3,0),222222222222222222\endPfad
\Pfad(6,0),222222222222222222\endPfad
\Pfad(9,0),222222222222222222\endPfad
\Pfad(12,0),222222222222222222\endPfad
\Pfad(21,6),222222\endPfad
\Pfad(3,18),111111111\endPfad
\Pfad(0,15),111111111111111\endPfad
\Pfad(0,12),111111111111111111111\endPfad
\Pfad(0,9),111111111111111111111\endPfad
\Pfad(0,6),111111111111111111111\endPfad
\Pfad(0,3),111111111111111111\endPfad
\Pfad(0,0),111111111111\endPfad
\hbox{\hskip4cm}
\PfadDicke{.5pt}
\Pfad(18,3),222222222\endPfad
\Pfad(15,3),222222222222\endPfad
\Pfad(12,0),222222222222222\endPfad
\Pfad(0,0),222222222222222222\endPfad
\Pfad(3,0),222222222222222222\endPfad
\Pfad(6,0),222222222222222222\endPfad
\Pfad(9,0),222222222222222222\endPfad
\Pfad(21,6),222222\endPfad
\Pfad(0,18),111111111\endPfad
\Pfad(0,15),111111111111111\endPfad
\Pfad(0,12),111111111111111111111\endPfad
\Pfad(0,9),111111111111111111111\endPfad
\Pfad(0,6),111111111111111111111\endPfad
\Pfad(0,3),111111111111111111\endPfad
\Pfad(0,0),111111111111\endPfad
\hbox{\hskip3cm}
$$
\centerline{\small a. $\mathcal{M}_1$
\hskip3cm
b. $\mathcal{M}_2$
\hskip3cm
c. $\mathcal{M}_3$
\hskip3cm
d. $\mathcal{M}_4$
\hskip1cm}
\caption{Moon polyominoes with same row lengths.}
\label{fig:22}
\end{figure}

However, this conjecture cannot be proved using the same map from Section~\ref{S:proof} which moves the bottom row $r$ up while everything outside the maximal rectangle which contains $r$ remains the same. Namely, applying this idea to the filling in Figure~\ref{fig:24} which has $ne=5,\,se=5$ yields the filling in Figure~\ref{fig:25} with $ne=4,\,se=5$.

\begin{figure}[h]
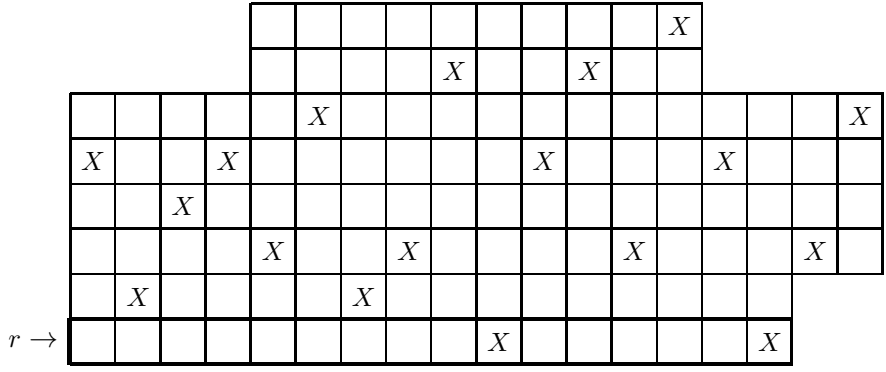

$$
\Einheit.2cm
\PfadDicke{.5pt}
\Pfad(0,18),111111111111111111111111111111111111111111111111111111\endPfad
\Pfad(0,15),111111111111111111111111111111111111111111111111111111\endPfad
\Pfad(0,12),111111111111111111111111111111111111111111111111111111\endPfad
\Pfad(0,9),111111111111111111111111111111111111111111111111111111\endPfad
\Pfad(0,6),111111111111111111111111111111111111111111111111111111\endPfad
\Pfad(12,24),111111111111111111111111111111\endPfad
\Pfad(12,21),111111111111111111111111111111\endPfad
\Pfad(0,0),222222222222222222\endPfad
\Pfad(3,0),222222222222222222\endPfad
\Pfad(6,0),222222222222222222\endPfad
\Pfad(9,0),222222222222222222\endPfad
\Pfad(12,0),222222222222222222222222\endPfad
\Pfad(15,0),222222222222222222222222\endPfad
\Pfad(18,0),222222222222222222222222\endPfad
\Pfad(21,0),222222222222222222222222\endPfad
\Pfad(24,0),222222222222222222222222\endPfad
\Pfad(27,0),222222222222222222222222\endPfad
\Pfad(30,0),222222222222222222222222\endPfad
\Pfad(33,0),222222222222222222222222\endPfad
\Pfad(36,0),222222222222222222222222\endPfad
\Pfad(39,0),222222222222222222222222\endPfad
\Pfad(42,0),222222222222222222222222\endPfad
\Pfad(45,0),222222222222222222\endPfad
\Pfad(48,0),222222222222222222\endPfad
\Pfad(51,6),222222222222\endPfad
\Pfad(54,6),222222222222\endPfad
\PfadDicke{1pt}
\Pfad(0,3),111111111111111111111111111111111111111111111111\endPfad
\Pfad(0,0),111111111111111111111111111111111111111111111111\endPfad
\Pfad(0,0),222\endPfad
\Pfad(48,0),222\endPfad
\Label\ro{\text {\small$X$}}(1,13)
\Label\ro{\text {\small$X$}}(4,4)
\Label\ro{\text {\small$X$}}(7,10)
\Label\ro{\text {\small$X$}}(10,13)
\Label\ro{\text {\small$X$}}(13,7)
\Label\ro{\text {\small$X$}}(16,16)
\Label\ro{\text {\small$X$}}(19,4)
\Label\ro{\text {\small$X$}}(22,7)
\Label\ro{\text {\small$X$}}(25,19)
\Label\ro{\text {\small$X$}}(28,1)
\Label\ro{\text {\small$X$}}(31,13)
\Label\ro{\text {\small$X$}}(34,19)
\Label\ro{\text {\small$X$}}(37,7)
\Label\ro{\text {\small$X$}}(40,22)
\Label\ro{\text {\small$X$}}(43,13)
\Label\ro{\text {\small$X$}}(46,1)
\Label\ro{\text {\small$X$}}(49,7)
\Label\ro{\text {\small$X$}}(52,16)
\Label\ro{\text {\small$r$}\rightarrow}(-3,1)
\hskip12cm
$$
\caption{The filling of $\mathcal{M}$ with $ne=5,\,se=5$.}
\label{fig:24}
\end{figure}

\begin{figure}[t]
$$
\Einheit.2cm
\PfadDicke{.5pt}
\Pfad(0,18),111111111111111111111111111111111111111111111111\endPfad
\Pfad(0,15),111111111111111111111111111111111111111111111111111111\endPfad
\Pfad(0,12),111111111111111111111111111111111111111111111111111111\endPfad
\Pfad(0,9),111111111111111111111111111111111111111111111111111111\endPfad
\Pfad(0,6),111111111111111111111111111111111111111111111111111111\endPfad
\Pfad(0,3),111111111111111111111111111111111111111111111111111111\endPfad
\Pfad(0,0),111111111111111111111111111111111111111111111111\endPfad
\Pfad(12,24),111111111111111111111111111111\endPfad
\Pfad(12,21),111111111111111111111111111111\endPfad
\Pfad(0,0),222222222222222222\endPfad
\Pfad(3,0),222222222222222222\endPfad
\Pfad(6,0),222222222222222222\endPfad
\Pfad(9,0),222222222222222222\endPfad
\Pfad(12,0),222222222222222222222222\endPfad
\Pfad(15,0),222222222222222222222222\endPfad
\Pfad(18,0),222222222222222222222222\endPfad
\Pfad(21,0),222222222222222222222222\endPfad
\Pfad(24,0),222222222222222222222222\endPfad
\Pfad(27,0),222222222222222222222222\endPfad
\Pfad(30,0),222222222222222222222222\endPfad
\Pfad(33,0),222222222222222222222222\endPfad
\Pfad(36,0),222222222222222222222222\endPfad
\Pfad(39,0),222222222222222222222222\endPfad
\Pfad(42,0),222222222222222222222222\endPfad
\Pfad(45,0),222222222222222222\endPfad
\Pfad(48,0),222222222222222222\endPfad
\Pfad(51,3),222222222222\endPfad
\Pfad(54,3),222222222222\endPfad
\PfadDicke{1pt}
\Pfad(0,18),111111111111111111111111111111111111111111111111\endPfad
\Pfad(0,15),111111111111111111111111111111111111111111111111\endPfad
\Pfad(0,15),222\endPfad
\Pfad(48,15),222\endPfad
\Label\ro{\text {\small$X$}}(1,16)
\Label\ro{\text {\small$X$}}(4,1)
\Label\ro{\text {\small$X$}}(7,7)
\Label\ro{\text {\small$X$}}(10,10)
\Label\ro{\text {\small$X$}}(13,7)
\Label\ro{\text {\small$X$}}(16,13)
\Label\ro{\text {\small$X$}}(19,4)
\Label\ro{\text {\small$X$}}(22,4)
\Label\ro{\text {\small$X$}}(25,19)
\Label\ro{\text {\small$X$}}(28,1)
\Label\ro{\text {\small$X$}}(31,10)
\Label\ro{\text {\small$X$}}(34,19)
\Label\ro{\text {\small$X$}}(37,4)
\Label\ro{\text {\small$X$}}(40,22)
\Label\ro{\text {\small$X$}}(43,10)
\Label\ro{\text {\small$X$}}(46,1)
\Label\ro{\text {\small$X$}}(49,4)
\Label\ro{\text {\small$X$}}(52,13)
\Label\ro{\text {\small$r^\prime$}\rightarrow}(-3,16)
\hskip12cm
$$
\caption{The filling of $\mathcal{M^\prime}$ with $ne=4,\,se=5$.}
\label{fig:25}
\end{figure}

\textbf{4.} The results of Rubey in~\cite{Rubey} for moon polyominoes were extended to almost-moon polyominoes by Poznanovi\'c and Yan~\cite{PY}. We say a row $r$ is an {\it exceptional row} of a polyomino $\mathcal{M}$ if there are rows above $r$ and below $r$ which are longer than $r$. Similarly, define the {\it exceptional column} $c$ of a polyomino $\mathcal{M}$ to be a column which has longer columns to both its right and its left.
An {\it almost-moon polyomino} is either a polyomino with comparable convex rows and at most one exceptional row or a polyomino with comparable convex columns and at most one exceptional column. See Figure~\ref{fig:23} for an illustration of almost-moon polyominoes with one exceptional row $r$ and one exceptional column $c$.

One may wonder whether the results in this paper could be extended to almost-moon polyominoes. The answer to this question is negative. We give two counterexamples below.

\begin{exa} \label{Ex7} 
Let $\mathcal{A}_{1}$ be the almost-moon polyomino with one exceptional row given in Figure~\ref{fig:23}a.
Let $G_{1}(x,y)=\sum_{M\in\mathcal{F}(\mathcal{A}_{1})} x^{\ne(M) } y^{\se(M)}$
be the joint distribution of $(ne,se)$ over fillings of $\mathcal{P}_{1}$. Then
\begin{align*}   
G_{1}(x,y)=(15x^5y^3+13x^3y^5)+56x^4y^4+(80x^5y^2+82x^2y^5)+(1180x^4y^3+1178x^3y^4)\nonumber \\ 
+5(x^5y+xy^5) +(1210x^4y^2+1212x^2y^4)+5370x^3y^3+10(x^4y+xy^4) \nonumber \\
+(1477x^3y^2+1473x^2y^3)+64x^2y^2.
\end{align*}
\end{exa}

\begin{exa} \label{Ex8} 
Let $\mathcal{A}_{2}$ be the almost-moon polyomino as given in Figure~\ref{fig:23}b.
Let $G_{2}(x,y)=\sum_{M\in\mathcal{F}(\mathcal{A}_{2})} x^{\ne(M) } y^{\se(M)}$
be the joint distribution of $(ne,se)$ over fillings of $\mathcal{P}_{2}$. Then
\begin{align*}   
G_{2}(x,y)=(8x^5y^3+15x^3y^5)+48x^4y^4+(83x^5y^2+77x^2y^5)+(1129x^4y^3+1174x^3y^4) \nonumber \\ 
+(9x^5y+8xy^5) +(1273x^4y^2+1227x^2y^4)+5434x^3y^3+(6x^4y+7xy^4) \nonumber \\
+(1415x^3y^2+1467x^2y^3)+60x^2y^2.
\end{align*}
\end{exa}

\begin{figure}[h]
$$
\Einheit.15cm
\PfadDicke{.5pt}
\Pfad(0,15),111111111111111111111\endPfad
\Pfad(0,12),111111111111111111111\endPfad
\Pfad(0,9),111111111111111111111\endPfad
\Pfad(0,6),111111111111111111111\endPfad
\Pfad(0,3),111111111111111111111\endPfad
\Pfad(0,0),111111111111111111111\endPfad
\Pfad(0,0),222222222\endPfad
\Pfad(0,12),222\endPfad
\Pfad(3,0),222222222222222\endPfad
\Pfad(6,0),222222222222222\endPfad
\Pfad(9,0),222222222222222\endPfad
\Pfad(12,0),222222222222222\endPfad
\Pfad(15,0),222222222222222\endPfad
\Pfad(18,0),222222222222222\endPfad
\Pfad(21,0),222222222222222\endPfad
\Label\ro{\text {\small$r$}\rightarrow}(-4,10)
\hbox{\hskip6cm}
\PfadDicke{.5pt}
\Pfad(0,15),111111\endPfad
\Pfad(9,15),111111111111\endPfad
\Pfad(0,12),111111111111111111111\endPfad
\Pfad(0,9),111111111111111111111\endPfad
\Pfad(0,6),111111111111111111111\endPfad
\Pfad(0,3),111111111111111111111\endPfad
\Pfad(0,0),111111111111111111111\endPfad
\Pfad(0,0),222222222222222\endPfad
\Pfad(3,0),222222222222222\endPfad
\Pfad(6,0),222222222222222\endPfad
\Pfad(9,0),222222222222222\endPfad
\Pfad(12,0),222222222222222\endPfad
\Pfad(15,0),222222222222222\endPfad
\Pfad(18,0),222222222222222\endPfad
\Pfad(21,0),222222222222222\endPfad
\Label\ro{\text {\small$c$}}(7,19)
\Label\ro{\text {$\downarrow$}}(7,16)
\hskip3cm
$$
\centerline{\small a. $\mathcal{A}_{1}$ 
\hskip5.5cm
b. $\mathcal{A}_{2}$}
\caption{Two almost-moon polyominoes.}
\label{fig:23}
\end{figure}

Note that if the generating function of $(\mathrm{ne}, \mathrm{se})$ is invariant under row or column permutations, then it is symmetric in $x$ and $y$. However, in   Examples~\ref{Ex7} and~\ref{Ex8} we have  $G_{1}(x,y) \neq G_{1}(y,x)$ and $G_{2}(x,y)\neq G_{2}(y,x)$.

\section*{Acknowledgments}
The authors thank Christian Krattenthaler
for many helpful discussions, also for his comments and corrections on a preliminary draft of the paper.
TG thanks the members of the combinatorics group at the University of Vienna for creating a wonderful, stimulating working environment and for their support during her stay in Vienna. SP also thanks the combinatorics group at the University of Vienna for their hospitality during her visit in the fall of 2018.

\bibliographystyle{plain}

\end{document}